\newtheorem{theorem}{Theorem}[section]
\newtheorem{lemma}[theorem]{Lemma}
\newtheorem{proposition}[theorem]{Proposition}
\newtheorem{corollary}[theorem]{Corollary}
\newtheorem{remark}[theorem]{Remark}
\newtheorem{example}[theorem]{Example}
\newtheorem{conjecture}[theorem]{Conjecture}
\newtheorem*{constructionA}{Construction A}
\newtheorem*{constructionB}{Construction B}
\newtheorem*{constructionC}{Construction C}
\newtheorem*{thmA}{Theorem A}
\newtheorem*{thmB}{Theorem B}
\newtheorem*{thmC}{Theorem C}
\newtheorem*{thmD}{Theorem D}
\newtheorem*{thmE}{Theorem E}
\newtheorem*{analysis}{Comparative Analysis}
\begin{document}

\title{Projective planes and set multipartite Ramsey numbers for $C_4$ versus star}

\author{Claudia J. F. Gon\c{c}alves$^{a}$, \ Emerson L. Monte Carmelo$^{b}$, \\ Irene N. Nakaoka$^{c}$
}
\maketitle

\begin{abstract}
Set multipartite Ramsey numbers were introduced in 2004, ge-neralizing the celebrated Ramsey numbers.
Let $C_4$ denote the four cycle and let $K_{1,n}$ denote the star on $n+1$ vertices.
In this paper we investigate bounds on $C_4-K_{1,n}$ set multipartite Ramsey numbers.
Relationships between these numbers and the classical $C_4-K_{1,n}$ Ramsey numbers
are explored. Then several near-optimal or exact classes are derived as applications.
As the main goal, polarity graphs from projective planes allow us to find suitable subgraphs which yield some optimal classes too.
\vspace{3mm}

\noindent \textit{Keywords:} Ramsey number, multipartite graph, projective plane, polarity graph, four cycle, star.

\vspace{3mm}

\noindent \textit{MSC[2010]:} 05C55, 05B25, 05D10.

\end{abstract}

\section{Introduction}

\subsection{Ramsey numbers}
One of the most extensively studied problems in Combinatorics is that of determining the Ramsey numbers for graphs, defined as follows. Let $G_{1}$ and $G_{2}$
 be simple graphs. The \emph{graph Ramsey number}  $r(G_{1}; G_{2})$ denotes the smallest integer $c$ such that, given any
subgraph $H$ of the complete graph $K_{c}$ on $c$ vertices, the graph $H$ contains a copy of $G_{1}$ or there is a copy of $G_{2}$ in the complement of $H$.

Let $C_4$ denote the 4-cycle and let $K_{1,n}$ denote the star on $n+1$ vertices. The systematic study of the numbers $r(n):=r(C_{4}; K_{1,n})$
 was initially by Parsons \cite{Parsons} in 1975, who calculated the general upper bound $r(n)\leq n+\left\lceil \sqrt{n}\ \right\rceil +1$ and computed the exact classes
$r(n)=n+q+1$ for $n=q^{2}$ or $n=q^{2}+1$, where $q$ denotes a prime
power. Essentially, these lower bounds are derived from polarity
graphs on projective planes.

On the basis of subgraphs of such polarity graphs, Parsons
\cite{Pa0} obtained the following classes: for an odd prime power
$q$ and any $k$ such that $1\leq k\leq (q+3)/4$,
$r(q^{2}-2k)=q^{2}+q-2k+1$; if $q$ is a power of $2$, then
$r(q^{2}-k)=q^{2}+q-k+1$ for any $k$ such that $1\leq k\leq q-1$ or
$k=q+1$. The determination of exact values on $r(n)$ is a
long-standing and difficult problem. Further exact classes in
\cite{m, Wu, Z1, Z2} typically focus on the cases where $n$ is close
to $q^2$, $q(q-1)$ or $(q-1)^2$, where $q$ is a prime power.

In 2004, Burger et al. \cite{Burger} introduced the Ramsey numbers when
the host graph is a multipartite graph, more specifically,  let
$K_{c\times s}$ represent the complete multipartite graph having $c$
classes with $s$ vertices per each class. Given $s\geq 1,$ the
\emph{set multipartite Ramsey number} $M_s(G_1; G_2)$ denotes the
smallest positive integer $c$ with the property:
 for any subgraph $H$ of the multipartite graph $K_{c\times s}$, $H$ contains a copy of $G_1$ or $\overline{H}$ contains a copy of $G_2$,
  where $\overline{H}$ denotes the complement of $H$ relative to $K_{c\times s}$.
Since $K_{c \times 1}$ is isomorphic to $K_{c},$ these numbers can
be regarded as an extension of the graph Ramsey numbers, more
formally, $M_1(G_1; G_2)=r(G_1; G_2).$ The numbers $M_s(G_1; G_2)$
have been investigated when $G_1$ and $G_2$ are both multipartite
graphs \cite{Stipp, Burger}.

\subsection{Our main contributions}
In this work we focus on the numbers $M_s(n):=M_s(C_4; K_{1,n}).$
Thus $r(n)$ can be reformulated as $M_1(n).$ We describe here the
optimal classes obtained.

\begin{thmA}  \label{circulant}
Given $s\geq 2$, let $a$ be an integer such that $-1\leq a\leq \lfloor s/2\rfloor-1$. Then, for all integer $k$ in the range $1\leq k\leq s-(2a+1)$, $$M_s(ks+a)=k+2.$$
\end{thmA}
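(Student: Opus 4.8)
The plan is to prove the two bounds $M_s(ks+a)\le k+2$ and $M_s(ks+a)\ge k+2$ separately, translating the Ramsey condition each time into a statement about vertex degrees in a $C_4$-free subgraph of a complete multipartite graph; throughout write $n=ks+a$ and recall that every vertex of $K_{c\times s}$ has degree $(c-1)s$.

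For the upper bound I would fix an arbitrary subgraph $H$ of $K_{(k+2)\times s}$ that contains no $C_4$ and show that $\overline H$ must contain $K_{1,n}$. Since each vertex has degree $(k+1)s$ in the host, it suffices to find one vertex $v$ with $\deg_H(v)\le (k+1)s-n=s-a$, for then $v$ has at least $n$ neighbours in $\overline H$. Assume instead that $\delta(H)\ge s-a+1$. As $H$ is $C_4$-free, any two vertices have at most one common neighbour, so $\sum_v\binom{\deg_H(v)}{2}\le\binom{(k+2)s}{2}$; inserting $\deg_H(v)\ge s-a+1$ gives $(k+2)s\ge (s-a+1)(s-a)+1$. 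On the other hand the hypothesis $k\le s-(2a+1)$ forces $(k+2)s\le s^2-2as+s$, and a one-line computation gives $(s-a+1)(s-a)+1-(s^2-2as+s)=a^2-a+1\ge 1$, a contradiction. Hence such a $v$ exists and $K_{1,n}\subseteq\overline H$, i.e.\ $M_s(ks+a)\le k+2$.

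For the lower bound I would produce a single colouring of $K_{(k+1)\times s}$ witnessing $M_s(ks+a)>k+1$: a subgraph $H$ with no $C_4$ such that $\overline H$ has no $K_{1,n}$. Because $k\ge 1$, the balanced complete multipartite graph $K_{(k+1)\times s}$ is Hamiltonian, and (when $(k+1)s\ge 5$, the only smaller instance being checked directly) a Hamilton cycle $H$ is a $2$-regular $C_4$-free spanning subgraph. In the host every vertex has degree $ks$, so $\Delta(\overline H)=ks-2$, and since $a\ge-1$ this is at most $ks+a-1=n-1$; therefore $\overline H$ contains no $K_{1,n}$, and we are done. The same $H$ works for every admissible $a$ at once, and it may be described explicitly as the circulant on $\mathbb Z_{(k+1)s}$ with connection set $\{\pm1\}$ whose colour classes are the residues modulo $k+1$, which is presumably the source of the label of the theorem.

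The routine work is the arithmetic in the two bounds. The point that needs care --- and really the only content --- is recognising that the range $1\le k\le s-(2a+1)$ is precisely the regime in which the standard counting inequality for $C_4$-free graphs yields $(k+2)s<(s-a+1)(s-a)+1$, the available slack being the always-positive quantity $a^2-a+1$; outside this range the argument breaks down, which is why the theorem stops there. I do not anticipate any deeper obstacle, only the routine verification that a $2$-regular $C_4$-free subgraph of $K_{(k+1)\times s}$ exists (it does, apart from the degenerate instance $s=2$, $k=1$, $a=-1$).
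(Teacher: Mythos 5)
Your proof is correct and follows essentially the same route as the paper: the upper bound is the same common-neighbour/$K_{1,2}$-counting argument that the paper packages as Proposition \ref{LimSup}, and the lower bound is the same $2$-regular $C_4$-free spanning subgraph (the paper uses $s$ disjoint $(c-1)$-cycles or a Hamiltonian cycle in Proposition \ref{cond_aritmetica}); you merely inline the arithmetic, with the slack $a^2-a+1\ge 1$ replacing the paper's ceiling/floor manipulations of $(n+\sqrt{t})/s$. The degenerate instance $(s,k,a)=(2,1,-1)$ you flag (where $n=1$) is likewise not covered by the paper's own proof, which assumes $n\ge 2$ throughout.
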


For $s$ fixed, the value of $n=ks+a$ in Theorem A belongs
to the interval $[s-1,s^2+s]$. To extend the range of the parameter
$n$, we translate the knowledge on graph Ramsey numbers into set
multipartite Ramsey numbers. For this purpose, a link between $r(n)$
and $M_s(n)$ allows us to get several near or optimal classes in
Section \ref{SecaoExatas}, for instance:

\begin{thmB} \label{ramsey1}
Let $s$ be  an odd integer, with $s\geq 3$, and suppose that $m$ is a positive integer such that $4^m\equiv 1 \, ({\rm mod}\, s)$ and $2^{m}\geq s$.
For all positive integer $k$,
$$M_s(2^{2mk}-2^{mk}-1)=\frac{2^{2mk}-1}{s}+1.$$
\end{thmB}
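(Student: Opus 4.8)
The plan is to prove the two matching bounds $M_s(n)\le \frac{q^2-1}{s}+1$ and $M_s(n)\ge \frac{q^2-1}{s}+1$, writing throughout $q=2^{mk}$ and $n=2^{2mk}-2^{mk}-1=q^2-q-1$. The two hypotheses serve only to record arithmetic facts: since $4^m\equiv 1\pmod s$ we have $q^2-1=(4^m)^k-1\equiv 0\pmod s$, so $c_0:=(q^2-1)/s$ is a positive integer (and $q^2-1$ is odd, consistent with $s$ odd); and since $k\ge 1$, $q=2^{mk}\ge 2^m\ge s$. Note also that $q$ is a power of two, so Parsons' results for powers of two apply to it.

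\emph{Upper bound.} I would argue by contradiction: suppose some red/blue colouring of $K_{(c_0+1)\times s}$ has no red $C_4$ and no blue $K_{1,n}$. Let $H$ be the red graph, on $N:=(c_0+1)s=q^2-1+s$ vertices. Each vertex has $K_{(c_0+1)\times s}$-degree $N-s$ and blue degree at most $n-1$, hence red degree at least $(N-s)-(n-1)=q+1$, so $\delta(H)\ge q+1$. Since $H$ is $C_4$-free, any two vertices have at most one common neighbour, so the number of paths of length two in $H$ is at most $\binom N2$; but it equals $\sum_v\binom{\deg_H v}{2}\ge N\binom{q+1}{2}$. Comparing, $q(q+1)\le N-1=q^2+s-2$, i.e. $q\le s-2$, contradicting $q\ge s$. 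Hence $M_s(n)\le c_0+1$.

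\emph{Lower bound.} I would exhibit a colouring of $K_{c_0\times s}$ with no red $C_4$ and no blue $K_{1,n}$. By the case ``$q$ a power of two, $k=q+1$'' of Parsons' result quoted in the Introduction, $r(q^2-q-1)=q^2$; thus there is a red/blue colouring of $K_{q^2-1}$ with neither structure, whose red graph $G$ is therefore $C_4$-free on $q^2-1$ vertices with $\delta(G)\ge (q^2-2)-(n-1)=q$ (concretely, following Parsons one may take $G$ to be a suitable subgraph of the polarity graph $\mathrm{ER}_q$ of $PG(2,q)$, namely $\mathrm{ER}_q$ with the $q+1$ absolute points and the pole of the line they form deleted). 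Now partition $V(G)$ into $c_0$ classes of size $s$ --- any partition works, as $|V(G)|=c_0s$ --- colour an edge of $K_{c_0\times s}$ red precisely when it is an edge of $G$ joining two distinct classes, and colour the remaining between-class edges blue. The red graph is a subgraph of $G$, hence $C_4$-free; and each vertex loses at most $s-1$ of its (at least $q$) $G$-neighbours to its own class, so its red degree is at least $q+1-s$ and thus its blue degree is at most $(c_0s-s)-(q+1-s)=(q^2-1-s)-(q+1-s)=n-1$. So no blue $K_{1,n}$ appears, giving $M_s(n)\ge c_0+1$. The two bounds together yield $M_s(n)=c_0+1=\frac{2^{2mk}-1}{s}+1$.

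The only step that is not pure bookkeeping is the lower bound's reliance on a $C_4$-free graph on $q^2-1$ vertices of minimum degree $q$; this is exactly Parsons' polarity-graph construction, so I would either invoke his value $r(q^2-q-1)=q^2$ directly (the existence of the required colouring of $K_{q^2-1}$ being part of what that equality asserts) or else check that the explicit graph above is $q$-regular --- the point being that for a surviving vertex $v$ the polar line $v^{\perp}$ meets the line of absolute points in exactly one point, so exactly one neighbour of $v$ is removed. Everything else --- the path-counting for the upper bound and the degree bookkeeping for the lower bound --- is elementary, and the hypotheses are used only to guarantee $c_0\in\mathbb{Z}_{>0}$ and $q\ge s$.
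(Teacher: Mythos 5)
Your proposal is correct and follows essentially the same route as the paper: the lower bound comes from Parsons' value $r(q^2-q-1)=q^2$ for $q=2^{mk}$ a power of two, transferred to the multipartite setting by partitioning the extremal coloring into $c_0$ classes of size $s$ (this is exactly the content of the paper's Proposition \ref{lower}, which here needs no vertex deletion since $s\mid q^2-1$), and the upper bound is the cherry-counting density argument of Proposition \ref{LimSup}, which you have simply inlined and specialized (avoiding the paper's small computation that $q-1<\sqrt{t}<q$ forces the ceiling term to vanish). All the arithmetic checks out, so the only difference is that you re-prove the two cited propositions rather than invoking them.
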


\begin{thmC} \label{ramsey2}
Given a prime power $q\geq 3$, we have:
\begin{enumerate}
\item [(1)] $M_2(q^2-2)=\displaystyle{\frac{q^2+q}{2}};$
\item [(2)] If $q$ is even  and $q \geq 8$, then for each odd integer $k$ in the range $3\leq k \leq q-3$,
$$M_2(q^2-k-1)=\displaystyle{\frac{q^2+q-k+1}{2}};$$
\item [(3)] If $q$ is even, then $M_2((q-1)^2-2)=\displaystyle{\frac{(q-1)^2+q-3}{2}}+1;$
\item [(4)] If $q$ is odd, then for each even integer $k$ in the range $2\leq k\leq 2\lceil q/4\rceil$,
$$M_2(q^2-k)= \displaystyle{\frac{q^2+q-k}{2}}+1.$$
\end{enumerate}
\end{thmC}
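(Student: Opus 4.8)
The plan is to prove all four items by establishing matching bounds $M_2(n)\le c$ and $M_2(n)\ge c$, where in each case the target value $c$ and the parameter $n$ satisfy the numerical identities $2(c-1)-q=n$ (items (1), (2), (4)), resp.\ $2(c-1)-(q-1)=n$ (item (3)), together with $2c\le q^2+q$ (items (1), (2), (4)), resp.\ $2c\le q(q-1)$ (item (3)). The upper bound will be the double-counting argument behind Parsons' estimate for $r(n)$, now carried out inside $K_{c\times 2}$; the lower bound will be a folding of a suitable $C_4$-free subgraph of the polarity graph of $PG(2,q)$. The parities and ranges imposed on $k$ are exactly the constraints under which the required subgraph exists, and they match the ranges in the known evaluations of $r(n)$ for $n$ near $q^2$ and near $(q-1)^2$.

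For $M_2(n)\le c$, take an arbitrary $2$-colouring of $K_{c\times 2}$ with no red $C_4$ and let $H$ be the red graph, a $C_4$-free subgraph of $K_{c\times 2}$ on $N=2c$ vertices. Since two vertices of a $C_4$-free graph have at most one common neighbour, counting paths of length two gives $\sum_v\binom{d_H(v)}{2}\le\binom{N}{2}$. If all degrees were at least $q+1$ (resp.\ at least $q$, in item (3)) this would force $N\binom{q+1}{2}>\binom{N}{2}$ via $N\le q^2+q$ (resp.\ $N\binom{q}{2}>\binom{N}{2}$ via $N\le q(q-1)$), which is impossible; hence some vertex $v$ has red degree at most $q$ (resp.\ at most $q-1$), and by the displayed identity this $v$ has at least $n$ blue neighbours in $K_{c\times 2}$, so $\overline{H}\supseteq K_{1,n}$. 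Verifying the four arithmetic identities — precisely the formulas in the statement — finishes this direction.

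For $M_2(n)\ge c$ I would produce a $C_4$-free spanning subgraph $H$ of $K_{(c-1)\times 2}$ with $\delta(H)\ge q-1$ (items (1), (2), (4)), resp.\ $\delta(H)\ge q-2$ (item (3)); by the identities, this minimum degree keeps every blue degree at most $n-1$, so colouring $E(H)$ red certifies $M_2(n)>c-1$. Let $\Gamma$ be the polarity graph of $PG(2,q)$: it is $C_4$-free on $q^2+q+1$ vertices, its $q+1$ absolute points (a conic for $q$ odd, a line for $q$ even) form an independent set and have degree $q$, and every other vertex has degree $q+1$ and is adjacent to at most two absolute points. For item (1), delete three absolute points: no surviving absolute point loses a neighbour and no ordinary point loses more than two, so the minimum degree drops by at most one; the graph stays $C_4$-free; and it embeds as a red subgraph of $K_{(c-1)\times 2}$ as soon as its $q^2+q-2$ vertices are partitioned into non-adjacent pairs, which is possible because the surviving absolute points are mutually non-adjacent and the complement of $\Gamma$ restricted to the ordinary points is dense enough (minimum degree above half its order) to contain a perfect matching by Dirac's theorem. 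Items (2) and (4) run identically, deleting $k+2$, resp.\ $k+1$ absolute points — legitimate exactly because $k$ lies in the stated range — while item (3) uses instead a $C_4$-free subgraph of $\Gamma$ on about $q(q-1)$ vertices of minimum degree at least $q-2$ (delete a non-absolute point, its polar line, and a further controlled set), of the type appearing in the works on $r(n)$ near $(q-1)^2$.

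The heart of the matter is the lower bound: once a deletion set $S$ with $\delta(\Gamma-S)\ge q-1$ (resp.\ $q-2$) and even surviving order is in hand, the folding into non-adjacent pairs is routine because the relevant complement is so dense that a Dirac-type argument always delivers the pairing; the work is in selecting $S$ so that it meets every absolute point at most once and every ordinary point at most twice, and this bookkeeping — together with the exact values of $r(n)$ supplied by Parsons and the later references — is what forces $k$ to be odd with $3\le k\le q-3$ and $q\ge 8$ in item (2), and $k$ even with $2\le k\le 2\lceil q/4\rceil$ and $q$ odd in item (4). Once $S$ is fixed, colouring $E(\Gamma-S)$ (folded) red and the rest of $K_{(c-1)\times 2}$ blue yields a colouring with no red $C_4$ and no blue $K_{1,n}$, so $M_2(n)\ge c$, and the two bounds meet.
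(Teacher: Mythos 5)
Your upper bound is sound and is essentially the paper's Proposition \ref{LimSup} specialized to $s=2$ (the arithmetic identities you list do check out). The real problem is in your lower bound, which rests on the structural claim that every non-absolute vertex of the polarity graph is adjacent to at most two absolute points. That is true for $q$ odd (the absolute points form a conic, and a line meets a conic in at most two points), but it is \emph{false} for $q$ even: there the absolute points form a line $\ell$, and the pole $V=\sigma(\ell)$ is adjacent to \emph{all} $q+1$ absolute points. Consequently your recipe ``delete $k+2$ absolute points'' for item (2) (and even ``delete three absolute points'' for item (1) when $q$ is even) drops the degree of $V$ to $q-k-1$ (resp.\ $q-2$), well below the required $q-1$, so $V$ acquires at least $n$ blue neighbours and the colouring fails. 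This is repairable (delete $V$ together with $k+1$ absolute points, say), but as written the construction does not work, your own bookkeeping criterion (``every ordinary point at most twice'') is violated at $V$, and item (3) is not actually constructed at all --- it is deferred to ``the type appearing in the works on $r(n)$ near $(q-1)^2$.'' Your assertion that the parity and range conditions on $k$ ``are forced by the bookkeeping'' is likewise unsubstantiated: in reality they come from parity arithmetic and from the hypotheses of the known evaluations of $r(n)$.

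For comparison, the paper avoids all of this construction work. Its lower bound is the general transfer inequality $\lfloor (r(n)-1)/s\rfloor+1\le M_s(n)$ (Proposition \ref{lower}, cited from \cite{Pablo}) applied to the known exact values $r(q^2-2)$, $r(q^2-k-1)$, $r((q-1)^2-2)$ and $r(q^2-k)$ collected in Theorem \ref{Exatas}; the parity restrictions on $k$ are exactly what make $n+q$ even so that the resulting floor matches the ceiling coming from Proposition \ref{LimSup}. If you want to keep your direct-construction route, you must (i) correct the structural description of the polarity graph for $q$ even and adjust the deletion sets accordingly, (ii) actually exhibit the deletion set for item (3), and (iii) justify the ranges of $k$ honestly; otherwise the cleanest fix is to replace the whole lower-bound section by the citation-based transfer argument.
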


In order to study the numbers $M_s(n)$ for parameters that are not
contemplated by the results above, we propose some constructions of
graphs which can provide new exact classes. Our main approach
attempts to search multipartite subgraphs of polarity graphs from
projective planes with good impact on the numbers $M_s(n)$. For this
purpose, key ingredients for the constructions consist in: (i) an effective way to label the points and the lines of the projective plane and (ii)
an effective way to split the vertices and edges of a polarity graph into classes. Then an analysis on structural properties of the certain projective planes
allows us to obtain the main results of this work:

\begin{thmD}\label{exato_pot_primo_1}
Let $q$ be a prime power. For any $i$ in the range $0\leq i\leq q-1$,
$$M_q((q-1)(q-i)+1)=q-i+2.$$
\end{thmD}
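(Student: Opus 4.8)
The plan is to prove $M_q(n)\le q-i+2$ and $M_q(n)\ge q-i+2$ separately, where throughout $n=(q-1)(q-i)+1$. First I would record the defining reformulation. For a subgraph $H$ of $K_{c\times q}$ and a vertex $v$, the complement $\overline H$ contains a $K_{1,n}$ centred at $v$ exactly when $\deg_H(v)\le(c-1)q-n$; hence $M_q(n)\le c$ means ``every $C_4$-free $H\subseteq K_{c\times q}$ has a vertex of degree at most $(c-1)q-n$'', while $M_q(n)\ge c$ means ``there is a $C_4$-free $H\subseteq K_{(c-1)\times q}$ with $\delta(H)\ge(c-2)q-n+1$''. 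A routine computation with $c=q-i+2$ gives $(c-1)q-n=2q-i-1$ and $(c-2)q-n+1=q-i$, so the task splits into: (a) every $C_4$-free subgraph of $K_{(q-i+2)\times q}$ has a vertex of degree $\le 2q-i-1$; and (b) there exists a $C_4$-free subgraph of $K_{(q-i+1)\times q}$ with minimum degree $\ge q-i$.

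For (a) I would use the standard double-counting bound for $C_4$-free graphs: since any two vertices of a $C_4$-free graph have at most one common neighbour, a $C_4$-free graph $G$ on $N$ vertices satisfies $N\binom{\delta(G)}{2}\le\sum_v\binom{d(v)}{2}\le\binom N2$, that is $\delta(G)(\delta(G)-1)\le N-1$. With $N=(q-i+2)q$ it therefore suffices to verify the elementary inequality $(2q-i)(2q-i-1)>(q-i+2)q-1$ for every prime power $q\ge2$ and every $0\le i\le q-1$; writing $j=q-i\ge1$ this reduces to $q^2+qj+j^2-j+1>3q$, which is immediate for $j\ge2$ and reduces to $(q-1)^2>0$ when $j=1$. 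Consequently no $C_4$-free $H\subseteq K_{(q-i+2)\times q}$ can have $\delta(H)\ge 2q-i$, which is (a).

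The heart of the proof is (b), which I would realise as a multipartite subgraph of the Erd\H{o}s--R\'enyi polarity graph $ER_q$ of $PG(2,q)$ (the graph on the points of the plane with $x\sim y$ iff $x$ lies on the polar line of $y$): it is $C_4$-free, has $q^2+q+1$ vertices, and every point has degree $q+1$ except the $q+1$ absolute points, which have degree $q$ and lie on a common conic (for $q$ odd) or line (for $q$ even). I would fix an absolute point $P$, observe that $P\in P^{\perp}$, and delete the $iq+1$ points lying on a pencil $\mathcal D$ of $i$ lines through $P$ with $P^{\perp}\in\mathcal D$ (for $q$ odd the remaining $i-1$ lines taken among the secants, for $q$ even taken distinct from the absolute line); here $i=0$ means deleting only $P$, since the surviving classes will in any case be punctured there. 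The $q-i+1$ lines through $P$ outside $\mathcal D$, each punctured at $P$, partition the surviving $q(q-i+1)$ points into classes of size $q$; let $H$ be $ER_q$ restricted to these points with all within-class edges removed, so $H\subseteq K_{(q-i+1)\times q}$ is $C_4$-free automatically. I would then show $H$ is $(q-i)$-regular by bookkeeping, using that two distinct lines through $P$ meet only at $P$ and that the pole of any line through $P$ lies on $P^{\perp}$ (hence on a deleted line, hence is deleted): so for a surviving vertex $v$, the line $v^{\perp}$ is not one of the deleted lines, whence $v$ loses exactly $i$ pairwise distinct neighbours to $\mathcal D$; and inside its class $v$ loses exactly one edge when $v$ is non-absolute and none when $v$ is absolute, the latter because the line joining an absolute $v$ to $P$ is never equal to $v^{\perp}$. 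Thus $\deg_H(v)=(q+1)-i-1=q-i$ for non-absolute $v$ and $\deg_H(v)=q-i$ for absolute $v$, proving (b).

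I expect the main obstacle to be precisely this regularity count, and in particular controlling the absolute points: having degree only $q$, they have no room to spare beyond the $i$ forced deletions, and this is what dictates the two design choices — taking $P$ absolute (so that $P^{\perp}$ is itself one of the pencil lines, which is what makes the ``pole lies on $P^{\perp}$'' argument guarantee that no surviving vertex is the pole of a deleted line and that no surviving vertex lies on $P^{\perp}$), and keeping rather than deleting the line(s) carrying the surviving absolute points. A secondary point needing care is the parity split: for $q$ odd the incidences used are those of a conic with its tangent and secant lines, with the surviving absolute points distributed one per surviving secant class; for $q$ even the absolute set is a line $L$ whose pole lies off $L$, so the surviving absolute points all sit in the single class $L$ punctured at $P$. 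In both cases it is exactly the effective labelling of the points and lines advertised in the introduction that makes the pencil-through-$P$ decomposition and the ensuing degree computation transparent.
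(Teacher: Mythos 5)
Your proposal is correct and follows essentially the same route as the paper: your pencil-of-lines-through-an-absolute-point construction is exactly the paper's Construction B (there $P=(q,q)$ and the classes $V_j$ are the lines through $P$ punctured at $P$, with $P^{\perp}=V_q$ among the $i$ deleted classes), and your regularity bookkeeping reproduces Lemma \ref{lemma2}(3)--(5). The upper bound is the same double-counting density argument as Proposition \ref{LimSup}, merely run directly with the minimum degree instead of via Jensen's inequality and the ceiling computation.
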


\begin{thmE}   \label{exato_pot_primo_2}
Given a prime power $q$, we have:
\begin{enumerate}
\item [(1)] If $q\geq 5$, then $M_{q-k}((q-k)(q-2)+2)=q+1$ for any $k$ such that $0\leq k\leq\lfloor q/2\rfloor -1$;
\item [(2)] If $q\geq 3$, then $M_{q-1}((q-1)^2+1)=q+2$.
\end{enumerate}
\end{thmE}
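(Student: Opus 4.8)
The plan is to prove each part by a matching pair of bounds, via the usual reformulation of set multipartite Ramsey numbers: for $H\subseteq K_{c\times s}$ the complement $\overline H$ (inside $K_{c\times s}$) contains a copy of $K_{1,n}$ if and only if some vertex of $H$ has degree at most $(c-1)s-n$. Hence $M_s(n)>c$ is equivalent to the existence of a $C_4$-free spanning subgraph of $K_{c\times s}$ with minimum degree at least $(c-1)s-n+1$, while $M_s(n)\le c$ means that every $C_4$-free spanning subgraph of $K_{c\times s}$ has a vertex of degree at most $(c-1)s-n$. Substituting the claimed values, part (1) amounts to (i) constructing a $C_4$-free subgraph of $K_{q\times(q-k)}$ with minimum degree $q-k-1$, and (ii) showing that no $C_4$-free subgraph of $K_{(q+1)\times(q-k)}$ has minimum degree $2(q-k)-1$; part (2) amounts to (i) a $C_4$-free subgraph of $K_{(q+1)\times(q-1)}$ with minimum degree $q-1$, and (ii) no $C_4$-free subgraph of $K_{(q+2)\times(q-1)}$ of minimum degree $2q-2$.

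The upper bounds (ii) I would settle by double counting, forgetting the partition entirely: in any $C_4$-free graph on $N$ vertices one has $\sum_v\binom{d_v}{2}\le\binom{N}{2}$, hence $\delta(\delta-1)\le N-1$, i.e. $\delta\le\frac{1+\sqrt{4N-3}}{2}$. Taking $N=(q+1)(q-k)$ in part (1) and $N=(q+2)(q-1)$ in part (2), a direct calculation shows that over the stated range of $k$ (and for $q\ge5$, resp. $q\ge3$) the target value $2(q-k)-1$, resp. $2q-2$, is strictly larger than $\frac{1+\sqrt{4N-3}}{2}$, so no $C_4$-free subgraph can attain it; therefore $H$ contains a vertex of degree at most $(c-1)s-n$, which produces the required star in $\overline H$.

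The real content is in the lower-bound constructions (i), which follow the paper's recipe of extracting a multipartite graph from a polarity graph. Fix $PG(2,q)$ with a polarity $\pi$ whose polarity graph $ER_q$ on the point set is $C_4$-free, with $\deg_{ER_q}(P)=q$ if $P$ is absolute and $q+1$ otherwise. The labelling step is to choose the defining bilinear form so that, for a distinguished line $\ell_\infty$ with pole $O$, the point $O$ is non-absolute and the pencil of lines through $O$ avoids the tangents to the absolute conic (for $q$ odd, $O$ an interior point). Pass to the affine plane $AG(2,q)=PG(2,q)\setminus\ell_\infty$. For part (2), the $q+1$ lines through $O$, intersected with $AG(2,q)\setminus\{O\}$, partition the remaining $q^2-1$ points into $q+1$ classes of size $q-1$; let $H$ be the subgraph of $ER_q$ induced on these points with all intra-class edges deleted, which is $C_4$-free. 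For the degree count: a point of $AG(2,q)\setminus\{O\}$ loses exactly one $ER_q$-neighbour to the removal of $\ell_\infty$ and none to that of $O$, and on each line $m$ through $O$ the map $P\mapsto\pi(P)\cap m$ is an involution whose fixed points are exactly the absolute points of $m$; so a non-absolute point of $m$ loses at most one further neighbour to intra-class deletion while an absolute point loses none. Since no $m$ in the pencil is tangent, this accounting is uniform, giving $\delta(H)=q-1$ and hence $M_{q-1}((q-1)^2+1)\ge q+2$. For part (1), use instead the $q$ vertical affine lines (the lines through the pole of $\ell_\infty$, now chosen so that this pole is the point at infinity) as $q$ classes of size $q$; after deleting intra-class edges the restriction of $ER_q$ to $AG(2,q)$ is already $(q-1)$-regular. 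Then remove a symmetric family $T=-T$ of $k$ horizontal lines — which is possible precisely for $0\le k\le\lfloor q/2\rfloor-1$: this keeps $q$ classes, now of size $q-k$, preserves $C_4$-freeness, and (it is the symmetry $T=-T$ that neutralises the otherwise degenerate direction) makes every surviving vertex lose at most $k$ neighbours, so $\delta\ge q-k-1$ and $M_{q-k}((q-k)(q-2)+2)\ge q+1$.

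The hard part is exactly this minimum-degree bookkeeping in the constructions. One must pick the polarity so that the pencil through $O$ avoids all tangents to the absolute conic: a tangent point would be an absolute vertex joined to its entire class, and deleting its intra-class edges would drop its degree far below the threshold. One must also treat $q$ even separately, since there the absolute locus is a line rather than a nondegenerate conic, which changes the parity of the involution on each pencil line, the position of its fixed point, and the location of $O$; in fact for $q$ even exactly one pencil line is forced to be ``tangent'', and one has to verify that its class nonetheless becomes an independent set (the bad hub ends up on $\ell_\infty$, hence deleted), so that the degree count still goes through. Finally a handful of small values of $q$ should be checked directly.
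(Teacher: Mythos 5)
Your proposal is correct, and both halves of each part check out; the overall strategy (a cherry count $\sum_v\binom{d_v}{2}\le\binom{N}{2}$ for the upper bound, plus a pencil-partitioned subgraph of a polarity graph for the lower bound) is the paper's. The differences are in the details. For the upper bounds the paper invokes its Proposition \ref{LimSup}, which is the same count but keeps the multipartite complement degree; your observation that one may forget the partition entirely and still rule out minimum degree $2(q-k)-1$ (resp.\ $2q-2$) is a correct simplification for these parameters (I checked the arithmetic at the extreme $k=\lfloor q/2\rfloor-1$ and at $q=3$ for part (2)). For the lower bounds the paper's Construction C shrinks the classes of the regular pencil graph by deleting $k$ closed neighbourhoods $N[(0,\alpha^{q-l})]$, each of which meets every class exactly once by the ``two points on one line'' property, whereas you delete lines of a second pencil: $k$ horizontal lines with $T=-T$ in part (1), and the polar line $\ell_\infty=\pi(O)$ of a non-absolute interior pole $O$ in part (2). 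Both devices solve the same problem, namely that one direction is degenerate (the vertices $(0,b)$ carry their entire neighbourhood on a single horizontal line, and for even $q$ every pencil not centred at the nucleus contains exactly one tangent); the paper's closed-neighbourhood deletion absorbs this automatically, while your $T=-T$ symmetry and your remark that the offending absolute point $\pi(ON)$ necessarily lies on $\pi(O)=\ell_\infty$ handle it by hand --- and both of your deferred verifications do go through (in particular, for even $q$ the unique tangent line of the pencil really does become an independent class of the right size, each of whose points loses only the one neighbour $A_0\in\ell_\infty$). Two minor quibbles: a symmetric set $T$ of size $k$ exists for every $k\le q$, so the restriction $k\le\lfloor q/2\rfloor-1$ is imposed only by the matching upper bound, not by the construction; and no separate small-$q$ check is actually needed, since the degree bookkeeping and the counting inequality already hold throughout the stated ranges.
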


\subsection{Organization of the work and notation}
This article is structured as follows. Inspired by a result of Chen \cite{Chen}, a relationship between $M_s(n)$ and $M_s(n+1)$ is discussed in the next section.
Section \ref{SectionUpper} presents a general upper bound on $M_s(n)$ and the proof of Theorem A. A few known exact classes on $r(n)$ are reported in Section \ref{SecaoExatas}.
By combining the previous results, several good bounds on $M_s(n)$ are obtained, including Theorems B and C.
Projective plane and its polarity are constructed on the basis of cartesian coordinates in
Section \ref{polarity}. Structural properties of polarity graphs in Section \ref{polarity_graph} allow us to obtain some classes,
including Theorems D and E. To summarize our contributions, we present a table of lower and upper bounds on $M_s(n)$
for $2\leq s\leq 5$ and $2\leq n\leq 17$ in Section \ref{tabela}.

For convenience, we collect here some of the notation on graph
theory which we use throughout the work. For a graph $G$, denote its
vertex set by $V(G)$
 and its edge set by $E(G)$. Given a subset $W$ of $V(G)$, $G[W]$ represents the subgraph of $G$ induced by $W$. As usual, the neighborhood of a vertex $v$
 is represented by $N_G(v)=\{u\in V(G) \, : \, \{u,v\}\in E(G)\}$ with degree $d_G(v)=|N_G(v)|,$ and let $N_G[v]=N_G(v)\cup\{v\}$.
 The simplified notation $H\subseteq G$ indicates that $H$ is a subgraph of $G$, while $H\nsubseteq G$ means that
$G$ does not contain a copy of $H$. We refer to the book \cite{Bollobas} for further information on graph theory.


\section{A growth property} \label{s2}

Chen \cite{Chen} proved that $r(n+1)\leq r(n)+2$ for any $n\geq 2$,
answering a question posed by Burr, Erd\H{o}s, Faudree, Rousseau,
and Schep \cite{B}. A natural question arises: what are the
relationships between $M_s(n)$ and $M_s(n+1),$ besides the trivial
$M_s(n)\leq M_s(n+1)$? We now discuss this question. An extension of
Chen's result is established below.

\begin{proposition} \label{mono}
For integers $s\geq 2$ and $n\geq 2$, $M_s(n+1)\leq M_s(n)+2$.
\end{proposition}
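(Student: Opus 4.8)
\emph{Plan of proof.} The plan is to mimic Chen's argument for the single--host case, the one novelty being that one deletes entire \emph{classes} of $K_{c\times s}$ rather than single vertices. Put $c:=M_s(n)+2$; since neither $C_4$ nor $K_{1,n}$ can appear when $c=1$ (the graph $K_{1\times s}$ is edgeless), we have $M_s(n)\ge 2$ and hence $c\ge 4$. Fix an arbitrary $H\subseteq K_{c\times s}$, with classes $V_1,\dots,V_c$, such that $C_4\nsubseteq H$; the goal is to produce a vertex $v$ with $d_{\overline H}(v)\ge n+1$, equivalently, using $d_H(v)+d_{\overline H}(v)=(c-1)s$, a vertex with $d_H(v)\le (c-1)s-(n+1)$. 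I would argue by contradiction, assuming $\delta(H)\ge (c-1)s-n$.

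\emph{Reduction to $M_s(n)$.} For a suitable pair of classes $V_i,V_j$ (pinned down below), set $H':=H[\,V(H)\setminus(V_i\cup V_j)\,]$. Then $H'\subseteq K_{(c-2)\times s}=K_{M_s(n)\times s}$ and $C_4\nsubseteq H'$, so by the very definition of $M_s(n)$ the complement $\overline{H'}$ (taken in $K_{(c-2)\times s}$) contains $K_{1,n}$; pick $w\notin V_i\cup V_j$ with $d_{\overline{H'}}(w)\ge n$, that is $d_{H'}(w)\le (c-3)s-n$. As every $H$--neighbour of $w$ lies either in $V(H')$ or in $V_i\cup V_j$, we get $d_H(w)=d_{H'}(w)+|N_H(w)\cap(V_i\cup V_j)|$, and $|N_H(w)\cap(V_i\cup V_j)|\le 2s$ since $w$ is outside both classes. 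To strictly beat $\delta(H)$ I need this last quantity to be at most $2s-1$, i.e.\ I need $w$ to \emph{fail} to be adjacent to all of $V_i$ or all of $V_j$; choosing the pair $\{V_i,V_j\}$ well is exactly what delivers this.

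\emph{Choosing the pair of classes.} Call a vertex $x\notin V_i$ a \emph{full neighbour} of $V_i$ if it is adjacent in $H$ to every vertex of $V_i$. The structural input is that $C_4$--freeness forces each class to have at most one full neighbour, because two of them would share $s\ge 2$ common neighbours inside $V_i$ and close a $C_4$ (here $s\ge 2$ is essential). Write $f(V_i)$ for that vertex when it exists, and set $A(x):=\{\,i: f(V_i)=x\,\}$; by uniqueness the sets $A(x)$ are pairwise disjoint. I claim there is a pair $i\ne j$ with no common full neighbour: otherwise every pair $\{i,j\}$ would lie in one $A(x)$, and disjointness then forces all of $\{1,\dots,c\}$ into a single $A(x)$ --- a vertex $x$ adjacent to every vertex of every class, including the class containing $x$ itself, which is absurd. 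Feeding such a pair $V_i,V_j$ into the previous paragraph, $w\notin V_i\cup V_j$ cannot be a full neighbour of both, so $|N_H(w)\cap(V_i\cup V_j)|\le 2s-1$ and $d_H(w)\le (c-3)s-n+2s-1=(c-1)s-(n+1)<\delta(H)$, a contradiction. Hence $\delta(H)\le (c-1)s-(n+1)$, the minimising vertex $v$ gives $K_{1,n+1}\subseteq\overline H$, and since $H$ was arbitrary, $M_s(n+1)\le c=M_s(n)+2$.

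\emph{Main obstacle.} The whole difficulty sits in the off-by-one bookkeeping: after deleting two classes a vertex can lose up to $2s$ edges, which would only reproduce the target inequality with equality in the wrong place, so one must ensure the witness vertex $w$ sheds at most $2s-1$. That is precisely what ``at most one full neighbour per class'' buys --- and it is the point where the hypothesis $s\ge 2$ is indispensable, since for $s=1$ one vertex can be a full neighbour of arbitrarily many classes and one would be thrown back onto Chen's finer analysis of the neighbourhood of a minimum-degree vertex in a $C_4$-free graph. I would therefore expect the class-selection step to be the crux, with everything else routine.
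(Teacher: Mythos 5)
Your proof is correct. It ultimately rests on the same obstruction as the paper's argument --- in a $C_4$-free subgraph of $K_{c\times s}$ with $s\ge 2$, a single class cannot have two distinct full neighbours, since they would share the $s\ge 2$ vertices of that class as common neighbours --- but the architecture is genuinely different. The paper supposes $M_s(n+1)>c+2$ and applies the defining property of $M_s(n)$ twice, with adaptively chosen deletions: deleting $V_{c+1}\cup V_{c+2}$ yields a star centre $u_1\in V_1$, then deleting $V_1\cup V_{c+2}$ yields a second centre $u_2$; the cap $d_{\overline{G}}(u)\le n$ forces both to be fully joined in $G$ to $V_{c+2}$, and two vertices of $V_{c+2}$ close a $C_4$. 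You instead choose the pair of classes to delete \emph{before} invoking $M_s(n)$: the uniqueness of full neighbours makes the sets $A(x)$ pairwise disjoint, which (together with the fact that no vertex can be a full neighbour of its own class) produces a pair $V_i,V_j$ with no common full neighbour, so the single star centre $w$ returned by one application of $M_s(n)$ regains at most $2s-1$ edges when the two classes are restored, giving $d_H(w)\le (c-1)s-(n+1)$ and contradicting the assumed minimum degree. Your route trades the paper's second application of the $M_s(n)$ property for an a priori class-selection lemma; both arguments spend the hypothesis $s\ge 2$ at the same point, and yours is complete as written.
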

\begin{proof}
Let $c=M_s(n)$ and suppose for a contradiction that $M_s(n+1)>c+2$. It means that there is a subgraph $G$ of $K_{(c+2)\times s}$ such that
$G$ does not contain a copy of $C_4$ neither  $\overline{G}$ (the complement of $G$ relative to $K_{(c+2)\times s}$) contains a copy of $K_{1,n+1}$.
Write
$$V=V(K_{(c+2)\times s}) = V_1\cup \ldots \cup V_c \cup V_{c+1} \cup V_{c+2}.$$
Let us analyze the induced subgraph $H=G[V_{1} \cup \ldots \cup
V_{c}]$ of $G$ by deleting the vertices of $V_{c+1}\cup V_{c+2}$.
The choice of $c$ implies that $C_4 \subseteq H$ or
$K_{1,n}\subseteq \overline{H}$. Since $C_4\not\subseteq G$, the
graph $\overline{G}$ contains a star $K_{1,n}$. Denote by $u_1$ a
vertex in $\overline{G}$ of degree $n$. Without loss of
 generality, suppose $u_1\in V_1$. A similar reasoning shows that there is another star $K_{1,n}$ in $\overline{G}[V\setminus (V_{1}\cup V_{c+2})]$,
 whose vertex of degree $n$ is denoted by $u_2$. Such stars are contained in $\overline{G}[V\setminus (V_{c+1}\cup V_{c+2})]$
 and $\overline{G}[V\setminus (V_{1}\cup V_{c+2})]$, respectively, and $K_{1,n+1}\nsubseteq \overline{G}$, hence
\begin{equation}\label{af1}
V_{c+1}\cup V_{c+2}\subseteq N_G(u_1) \ \ \ \text{and} \ \ \ V_{1}\cup V_{c+2}\subseteq N_G(u_2).
\end{equation}
Let $x,y\in V_{c+2}$. By (\ref{af1}), the vertices $u_1xu_2y$ induce a $C_4$ in
$G$, a contradiction. Therefore, $M_s(n+1)\leq c+2$.
\end{proof}

Under certain conditions, a refinement of Proposition \ref{mono}
states that $M_s(n+1)=M_s(n)$ or $M_s(n+1)=M_s(n)+1$, more
specifically:

\begin{proposition}\label{n+1}
Given integers $s\geq 2$ and $n\geq 2,$ let $c=M_s(n)$. If $n<cs-(1+\sqrt{4s(c+1)-3})/2$, then $M_s(n+1)\leq M_s(n)+1$.
\end{proposition}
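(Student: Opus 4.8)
The plan is to imitate the proof of Proposition \ref{mono}, but to squeeze an extra contradiction out of the hypothesis on $n$, so that a host graph on only $c+1$ parts (rather than $c+2$) already forces a $C_4$ or a $K_{1,n+1}$. So suppose, for contradiction, that $M_s(n+1)>c+1$: there is a subgraph $G$ of $K_{(c+1)\times s}$ containing no $C_4$, with $\overline{G}$ (complement relative to $K_{(c+1)\times s}$) containing no $K_{1,n+1}$. Write $V=V_1\cup\cdots\cup V_c\cup V_{c+1}$. Deleting $V_{c+1}$ and invoking $c=M_s(n)$ gives, since $C_4\not\subseteq G$, a star $K_{1,n}$ in $\overline{G}[V\setminus V_{c+1}]$; call its center $u_1$, and say $u_1\in V_j$. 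Deleting instead $V_j$ gives a second star $K_{1,n}$ in $\overline{G}[V\setminus V_j]$ with center $u_2$. Exactly as in Proposition \ref{mono}, the fact that neither star extends to $K_{1,n+1}$ in $\overline{G}$ forces $V_{c+1}\subseteq N_G(u_1)$ and $V_j\subseteq N_G(u_2)$ (the vertices outside the respective deleted part that are non-neighbors of the center in $\overline{G}$ are already accounted for by the $n$ leaves, so the $s$ vertices of the missing part must all be $G$-neighbors of the center). If $u_1$ and $u_2$ lay in distinct parts other than $V_{c+1}$ we could not immediately run the old argument, so the key point is to handle the remaining freedom using the degree/counting hypothesis.

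The new ingredient is a counting bound on how large the common neighborhood in $G$ of two vertices can be when $G$ is $C_4$-free. First I would record the standard fact: in a $C_4$-free graph on a vertex set of size $N$, any two distinct vertices have at most one common neighbor, and consequently a vertex $v$ together with its neighborhood cannot contain too much; more usefully, if $d_G(v)=d$ then $v$ lies in no two edges spanning a $C_4$, which via the Kővári–Sós–Turán-type count gives $\sum_v \binom{d_G(v)}{2}\le\binom{N}{2}$, hence some vertex has $G$-degree at most roughly $\tfrac12(1+\sqrt{4N-3})$. Here $N=(c+1)s$ is the order of $K_{(c+1)\times s}$, so there is a vertex $w$ with $d_G(w)\le \tfrac{1}{2}\bigl(1+\sqrt{4s(c+1)-3}\bigr)$, and thus $d_{\overline G}(w)\ge (cs-1)-\tfrac12\bigl(1+\sqrt{4s(c+1)-3}\bigr) > n$ by the hypothesis $n< cs-\tfrac12\bigl(1+\sqrt{4s(c+1)-3}\bigr)$ (using that $w$ has $cs$ non-neighbors by partition, minus itself). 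Wait — this already yields $K_{1,n+1}\subseteq\overline G$ centered at $w$, a contradiction, without ever invoking $u_1,u_2$ at all. So in fact the cleaner route is: assume $M_s(n+1)>c+1$ only to get a $C_4$-free $G\subseteq K_{(c+1)\times s}$ with no $K_{1,n+1}$ in $\overline G$; the degree-counting bound produces a low-$G$-degree vertex whose $\overline G$-degree exceeds $n$, contradiction. (One should double-check the edge count $e(G)\le \tfrac12(1+\sqrt{4N-3})\cdot N /2$ is the right form; the bound I actually need is that the \emph{minimum} $G$-degree $\delta(G)$ satisfies $\delta(G)(\delta(G)+1)\le N-1$ is too weak — rather, averaging $\sum\binom{d_G(v)}{2}\le\binom N2$ gives a vertex with $\binom{d_G(v)}{2}\le \tfrac{N-1}{2}$, i.e. $d_G(v)\le\tfrac12\bigl(1+\sqrt{4N-3}\bigr)$, which is exactly the quantity appearing in the statement with $N=s(c+1)$.)

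So the key steps, in order, are: (1) negate the conclusion, obtaining $C_4$-free $G\subseteq K_{(c+1)\times s}$ with $K_{1,n+1}\not\subseteq\overline G$; (2) apply the $C_4$-free pair-counting inequality $\sum_{v}\binom{d_G(v)}{2}\le\binom{s(c+1)}{2}$ to find a vertex $w$ with $d_G(w)\le \tfrac12\bigl(1+\sqrt{4s(c+1)-3}\bigr)$; (3) since $w$ is non-adjacent in $K_{(c+1)\times s}$ to all $cs$ vertices in parts other than its own, conclude $d_{\overline G}(w)\ge cs - d_G(w) \ge cs - \tfrac12\bigl(1+\sqrt{4s(c+1)-3}\bigr) > n$, the last inequality being exactly the hypothesis; (4) this exhibits $K_{1,n+1}$ in $\overline G$, the desired contradiction, so $M_s(n+1)\le c+1=M_s(n)+1$. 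The main obstacle is purely bookkeeping: getting the off-by-one right in step (3) — whether $w$'s own part contributes, whether the bound should be $cs-1$ or $cs$ non-neighbors available in $\overline G$, and matching the floor/ceiling and the exact radicand $4s(c+1)-3$ in the paper's hypothesis — and making sure the pair-counting inequality is applied to the full graph $K_{(c+1)\times s}$ rather than a spanning subgraph so that $N=s(c+1)$ is correct. No deep idea is needed beyond the classical $C_4$-free counting bound already implicit in Parsons' work.
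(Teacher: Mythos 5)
Your proof is correct, but it takes a genuinely different route from the paper's. The paper argues locally: it uses $c=M_s(n)$ to extract a star centre $u$ with $V_{c+1}\subseteq N_G(u)$, observes that the sets $A_v=N_G(v)\setminus\{u\}$ for $v\in N_G(u)$ are pairwise disjoint by $C_4$-freeness, and deduces that a vertex $z\in V_{c+1}$ has $d_{\overline G}(z)\geq(cs-n-1)^2-s$, which together with $d_{\overline G}(z)\leq n$ yields the same quadratic inequality you obtain. You instead argue globally: the cherry count $\sum_v\binom{d_G(v)}{2}\leq\binom{N}{2}$ with $N=s(c+1)$ gives, by averaging, a vertex $w$ with $d_G(w)\leq\bigl(1+\sqrt{4N-3}\bigr)/2$, whence $d_{\overline G}(w)=cs-d_G(w)>n$ and $K_{1,n+1}\subseteq\overline G$. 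Both arguments bottom out in the inequality $d(d-1)\leq N-1$, which is why the radicand $4s(c+1)-3$ matches exactly, but yours is cleaner, never uses $c=M_s(n)$ beyond the numerical hypothesis, and is really the density argument of Proposition~\ref{LimSup} in disguise --- a worthwhile observation, since it shows the proposition is essentially a corollary of the general upper bound. Your opening paragraph imitating Proposition~\ref{mono} is abandoned and should be deleted, and in step (3) you should say $w$ \emph{is} adjacent in $K_{(c+1)\times s}$ to the $cs$ vertices outside its own part (so that $d_G(w)+d_{\overline G}(w)=cs$); with the correct count $cs$ rather than $cs-1$ the final strict inequality $d_{\overline G}(w)>n$ goes through.
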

\begin{proof}
Suppose for an contradiction that $M_s(n+1)>c+1$. Then there is a subgraph $G$ of $K_{(c+1)\times s}$ such that $G$ does not contain a copy of $C_4$ and
the complement $\overline{G}$ of $G$ (relative to $K_{(c+1)\times s}$) does not contain a copy of $K_{1,n+1}$. Write
 $V=V(K_{(c+1)\times s}) = V_1\cup V_2\cup \ldots\cup V_c \cup V_{c+1}$.

The choice of $c$ assures that $C_4 \subseteq G[V\setminus V_{c+1}]$ or $K_{1,n}\subseteq \overline{G}[V\setminus V_{c+1}]$. Since
 $C_4\not\subseteq G$, the graph $\overline{G}$ contains a copy of $K_{1,n}$. Note that $\overline{G}$ does  not contain $K_{1,n+1}$. Denote by $u$ the vertex of
  degree $n$ of this star $K_{1,n}$ in $\overline{G}$ and let $N=N_G(u)$. Note that $V_{c+1}\subseteq N$.

For each $v\in N$, denote $A_v = N_G(v)\setminus\{u\}$. The fact $C_4\not\subseteq G$ implies
\begin{equation}\label{e4}
    A_v \cap A_w = \emptyset  \ \ \ \ \text{for all} \   v, w\in N \   \text{with} \  v \neq w.
\end{equation}
For any $v\in N,$  $d_G(v)+d_{\overline{G}}(v)=cs$ and $d_{\overline{G}}(v)\leq n.$ Thus $|N_G(v)| \geq cs-n$ and
\begin{equation}\label{e5}
|A_v| = |N_G(v)| - 1 \geq cs-n-1.
\end{equation}

Let $z\in V_{c+1}$. Eq. (\ref{e4}) reveals that $zx \in E(\overline{G})$ for every $x\in A_w \setminus V_{c+1}$ and for every $w\in N\setminus\{z\}$. Hence,
$$ \left( \cup_{\stackrel{w\in N}{w\neq z}} A_w  \right) \setminus V_{c+1} \subseteq N_{\overline{G}}(z).$$

Eq. (\ref{e5}) states that $|N_{\overline{G}}(z)| \geq (|N|-1)(cs-n-1)-k$ for some integer $k$, $0\leq k\leq |V_{c+1}| \leq s$. Thus
$|N_{\overline{G}}(z)| \geq (cs-n-1)^2-s$ holds. On the other hand, $|N_{\overline{G}}(z)| \leq n.$ A combination of the last two inequalities implies
$n \geq (cs-n-1)^2-s$. Elementary calculation shows us that any solution (on the variable $n$) of this quadratic inequality does not satisfies the hypothesis.
Thus $M_s(n+1)\leq c+1$.
\end{proof}

We believe that a more general result holds by removing the
hypothesis from the last proposition. The following conjecture is
proposed.

\begin{conjecture}
For any integers $s\geq 2$ and $n\geq 2$, $M_s(n+1)\leq M_s(n)+1$.
\end{conjecture}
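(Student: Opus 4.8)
Since the statement is posed as a conjecture, what follows is the line of attack I would pursue, together with the point at which it currently stalls. The plan is to remove the hypothesis of Proposition~\ref{n+1} by reusing its skeleton and supplying extra structure. Put $c=M_s(n)$ and suppose, for a contradiction, that $M_s(n+1)\geq c+2$; fix a $C_4$-free graph $G\subseteq K_{(c+1)\times s}$ whose complement $\overline G$ (relative to $K_{(c+1)\times s}$) contains no $K_{1,n+1}$. Exactly as in Propositions~\ref{mono} and~\ref{n+1}, deleting one part $V_{c+1}$ and using the minimality of $c$ together with $C_4\nsubseteq G$ yields a star $K_{1,n}$ in $\overline G$ that avoids $V_{c+1}$; its centre $u$ then satisfies $d_{\overline G}(u)=n$ and $V_{c+1}\subseteq N_G(u)$. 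Writing $N=N_G(u)$, so $|N|=cs-n$, and $A_v=N_G(v)\setminus\{u\}$, the absence of $C_4$ makes the $A_v$ ($v\in N$) pairwise disjoint with $|A_v|\geq cs-n-1$, and for $z\in V_{c+1}$ the set $\bigl(\bigcup_{w\in N\setminus\{z\}}A_w\bigr)\setminus V_{c+1}$ embeds in $N_{\overline G}(z)$, giving $(cs-n-1)^2-s\leq|N_{\overline G}(z)|\leq n$.

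This is the computation behind Proposition~\ref{n+1}, and it already proves the conjecture whenever $cs-n$ exceeds roughly $\sqrt{s(c+1)}$ — equivalently, one may read this off from $|E(G)|\geq\tfrac{(c+1)s}{2}(cs-n)$ together with the K\H{o}v\'{a}ri--S\'{o}s--Tur\'{a}n/Reiman bound for the $C_4$-free graph $G$ on $(c+1)s$ vertices. Since the upper bounds of Section~\ref{SectionUpper} give $c=M_s(n)$ of order $n/s$, this settles every $n$ up to roughly $4s^2$. The unresolved case is the \emph{dense} regime, where $cs-n$ is at most of order $\sqrt{s(c+1)}$: here $K_{(c+1)\times s}$ is so dense and $\overline G$ so sparse (maximum degree $\leq n$) that the single counting inequality above is too lossy to force a contradiction.

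For the dense regime I would combine two further ideas. First, an \emph{amortisation over pivots}: the argument applies to every deleted part $V_i$, producing a vertex $u^{(i)}$ with $d_{\overline G}(u^{(i)})=n$ and $V_i\subseteq N_G(u^{(i)})$; if two pivots coincide one already gets $cs-n\geq 2s$, and otherwise the $c+1$ distinct pivots each carry a full part in their $G$-neighbourhood, which one hopes to overload $G$ via a second use of the neighbourhood-disjointness. Second, a \emph{stability} input: in the dense regime $G$ is near-extremal $C_4$-free, hence close to a polarity/incidence structure, so the links $A_v$ cannot all be as short as $cs-n-1$ and the vertices outside $N_G[u]$ are forced to have large $G$-degree; a sharpened lower bound on $\sum_{v\in N}|A_v|$ or on $|N|$ would then beat the $n$ on the right. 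One can also cross-check the target against the exact classes of Theorems~\ref{ramsey1}--\ref{exato_pot_primo_2}, which pin $M_s(n)$ down near the thresholds $q^2$, $q(q-1)$ and $(q-1)^2$. The real obstacle is the stability step: controlling near-extremal $C_4$-free graphs inside a complete multipartite host, quantitatively and uniformly in $s$ enough to overcome the $\sqrt{s(c+1)}$ slack, is exactly the kind of delicate extremal input that is not available off the shelf — which is, I suspect, why this appears as a conjecture rather than a theorem.
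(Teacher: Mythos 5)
The paper does not prove this statement: it is explicitly left as an open conjecture, so there is no proof of record to compare yours against. What the paper does prove is precisely the conditional version you reconstruct. Proposition~\ref{n+1} gives $M_s(n+1)\leq M_s(n)+1$ under the hypothesis $n<cs-(1+\sqrt{4s(c+1)-3})/2$ with $c=M_s(n)$, via the pivot-and-disjoint-links argument you describe: $V_{c+1}\subseteq N_G(u)$, the sets $A_v$ pairwise disjoint of size at least $cs-n-1$, hence $(cs-n-1)^2-s\leq n$. Up to that point your account is faithful and correct, and your observation that this is equivalent to playing the minimum-degree bound $d_G(v)\geq cs-n$ against the K\H{o}v\'ari--S\'os--Tur\'an/Reiman bound on the $(c+1)s$-vertex $C_4$-free graph $G$ is accurate.

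The genuine gap is the one you name yourself: the dense regime, where $cs-n$ is of order $\sqrt{s(c+1)}$ or smaller, is exactly where the counting inequality degenerates, and neither of your proposed repairs is carried out. The amortisation over pivots does not close it: coinciding pivots force $cs-n=d_G(u)\geq 2s$, but $2s$ exceeds the threshold $(1+\sqrt{4s(c+1)-3})/2$ only when $c$ is at most about $4s$, and $c+1$ distinct pivots each absorbing a full part contribute only on the order of $(c+1)s$ edge-endpoints, far below what the extremal bound can exclude. The stability step is stated as a hope (``a sharpened lower bound \ldots would then beat the $n$ on the right'') with no mechanism: no stability theorem for near-extremal $C_4$-free subgraphs of a complete multipartite host is invoked or proved, and it is not clear that such graphs are close enough to polarity graphs, uniformly in $s$, to recover the additive $\sqrt{n}$-sized gain you need. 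So your proposal is an honest partial argument that reproduces Proposition~\ref{n+1} and correctly diagnoses why the general case is hard, but it is not a proof; the statement remains open in the paper and in your write-up alike.
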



\section{An upper bound from density argument}\label{SectionUpper}

Density arguments have been a powerful method in exploring problems
of extremal graph theory, see \cite{Bollobas} for instance. By using this method, Parsons
\cite{Parsons} obtained the general upper bound $r(n)\leq n+\lceil
\; \sqrt{n} \; \rceil +1.$ It is worth mentioning that structural
properties of graphs were also used to reach a slight refinement of
the upper bound when $n$ is a square number. An adaptation for
multipartite graphs can extend Parson's bound except for the case
where $n$ is a square number, as established below.

\begin{proposition}\label{LimSup}
For integers $s\geq 1$ and $n\geq 2$, let $t=n+s-1$. The following
upper bound holds.
$$M_s(n)\leq \left \{
\begin{array}{ll}
\displaystyle{\frac{n+\sqrt{t}}{s}+2},                         & \mbox{if}\ \ \sqrt{t}\in\mathbb{Z}\ \ \mbox{and} \ \ s\mid (n+\sqrt{t});\\
                                                               &                                                               \\
\displaystyle{\left\lceil\frac{n+\sqrt{t}}{s}\right\rceil+1},  & \mbox{otherwise}. \\
\end{array}
\right. $$
\end{proposition}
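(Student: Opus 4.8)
The plan is to adapt Parsons' classical density argument to the multipartite host graph $K_{c \times s}$. Suppose, for a contradiction, that $c = M_s(n) - 1$ fails to force the dichotomy, i.e.\ there is a subgraph $G$ of $K_{c\times s}$ with $C_4 \nsubseteq G$ and $K_{1,n} \nsubseteq \overline{G}$. The latter condition means $d_{\overline{G}}(v) \leq n-1$ for every vertex $v$, so in the host $K_{c\times s}$ (where a vertex in class $V_i$ has host-degree $(c-1)s$) we get $d_G(v) \geq (c-1)s - (n-1)$ for all $v$. The $C_4$-freeness of $G$ is the key combinatorial constraint: for any two distinct vertices $u, w$ that lie in the \emph{same} part, or more usefully for any vertex $z$, the neighborhoods cannot share two common elements; equivalently, $\sum_{v} \binom{d_G(v)}{2} \leq \binom{cs}{2} - (\text{non-adjacent pairs})$, but it is cleaner to count "cherries" (paths of length two). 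First I would count, over all vertices $v$, the number of paths $x v y$ with $x \ne y$; this is $\sum_v \binom{d_G(v)}{2}$. Since $G$ has no $C_4$, each unordered pair $\{x,y\}$ of vertices is the set of endpoints of at most one such cherry, so $\sum_v \binom{d_G(v)}{2} \leq \binom{cs}{2}$.

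Next I would feed in the degree lower bound. Writing $N = cs$ for the total number of vertices and $\delta = (c-1)s - n + 1$ for the minimum $G$-degree, convexity of $\binom{x}{2}$ gives $\sum_v \binom{d_G(v)}{2} \geq N \binom{\delta}{2}$, hence $cs \cdot \binom{\delta}{2} \leq \binom{cs}{2}$, i.e.\ $\delta(\delta - 1) \leq cs - 1$. Substituting $\delta = (c-1)s - n + 1 = cs - (n + s - 1) = cs - t$ turns this into $(cs - t)(cs - t - 1) \leq cs - 1$. Setting $x = cs$, this is the quadratic inequality $x^2 - (2t+2)x + (t^2 + t + 1) \leq 0$ in $x$, whose larger root is $t + 1 + \sqrt{t}$ (one checks $(t+1+\sqrt t)^2 - (2t+2)(t+1+\sqrt t) + t^2+t+1 = t - \sqrt t \cdot 1$; I will verify the discriminant is $4t$ so the roots are $t+1 \pm \sqrt t$, giving $cs \leq t + 1 + \sqrt{t} = n + s + \sqrt{t}$, i.e.\ $c \leq (n + \sqrt t)/s + 1$). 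Therefore any $c$ with $c > (n+\sqrt t)/s + 1$ already forces the Ramsey dichotomy, which yields $M_s(n) \leq (n+\sqrt t)/s + 2$ whenever $(n+\sqrt t)/s$ is an integer, and $M_s(n) \leq \lceil (n+\sqrt t)/s \rceil + 1$ otherwise (since $c$ must be an integer exceeding $(n+\sqrt t)/s + 1$, the smallest such $c$ is at most $\lceil (n+\sqrt t)/s \rceil + 1$; when the quotient is an integer and $\sqrt t$ is irrational we are in the second line anyway, and the refinement to $+2$ in the first line needs the extra input below).

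The subtle point — and the main obstacle — is the first line of the bound, where $\sqrt{t} \in \mathbb{Z}$ and $s \mid (n + \sqrt t)$: here the crude argument above only gives $c \leq (n+\sqrt t)/s + 1$ with possible equality, so I must rule out the boundary case. This is exactly where Parsons invokes structural properties of extremal $C_4$-free graphs (essentially, near-equality in the cherry count forces the graph to be a polarity-type graph, and a parity or counting obstruction then eliminates the extremal configuration). I would mirror that analysis in the multipartite setting: if $c = (n+\sqrt t)/s + 1$ survived, then every inequality above would be essentially tight, forcing $d_G(v) = \delta$ for (almost) all $v$ and forcing almost every pair of vertices to have exactly one common neighbor — a strongly-regular-like condition incompatible with the partition into $c$ independent sets $V_i$ of size $s$, because vertices in the same part are non-adjacent yet would still need to be joined by a unique cherry, creating an overcount. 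Making this last step precise (the exact counting identity that is violated) is the delicate part and is where I would spend the most care; the rest is the routine density computation sketched above. The case $s = 1$ recovers Parsons' bound $r(n) \leq n + \lceil \sqrt n \rceil + 1$ with $t = n$, consistent with the statement.
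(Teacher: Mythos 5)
Your density computation is exactly the paper's proof: count cherries (copies of $K_{1,2}$), use $C_4$-freeness to get $\sum_v \binom{d_G(v)}{2} \leq \binom{cs}{2}$, feed in the minimum degree $\delta = cs-t$ coming from $K_{1,n}\nsubseteq\overline{G}$, and solve the quadratic to conclude $cs\leq t+1+\sqrt{t}$, i.e.\ $c\leq (n+\sqrt{t})/s+1$ for any configuration avoiding both target graphs. The roots $t+1\pm\sqrt{t}$ are correct.

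The problem is your final paragraph: you have misread the logic of the first case and declared your proof incomplete at a point where it is already finished. The density bound says every integer $c$ with $c>(n+\sqrt{t})/s+1$ forces the dichotomy, so $M_s(n)$ is at most the least integer \emph{strictly exceeding} $(n+\sqrt{t})/s+1$. When $\sqrt{t}\in\mathbb{Z}$ and $s\mid(n+\sqrt{t})$ that threshold is itself an integer, so the least integer strictly above it is $(n+\sqrt{t})/s+2$ --- which \emph{is} the first line of the proposition, with no further input needed. (You state exactly this in the parenthetical of your second paragraph before contradicting yourself.) ``Ruling out the boundary case'' $c=(n+\sqrt{t})/s+1$ would prove the strictly stronger bound $M_s(n)\leq (n+\sqrt{t})/s+1$; that is the analogue of Parsons' structural refinement for square $n$, which this proposition deliberately does not claim --- the paper says its adaptation extends Parsons' bound ``except for the case where $n$ is a square number.'' Relatedly, your closing sanity check is off: at $s=1$ with $n$ a perfect square the first line gives $n+\sqrt{n}+2$, which is weaker than Parsons' $n+\sqrt{n}+1$, so the statement does not fully recover Parsons' bound. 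Delete the last paragraph and the clause ``the refinement to $+2$ in the first line needs the extra input below,'' and what remains is a complete, correct proof identical in substance to the paper's.
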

\begin{proof}
Given an integer $c$, to be estimated afterward, let $G=(V,E)$ be an
arbitrary subgraph of $K_{c\times s}$. Suppose that $\overline{G},$
the complement of $G$ relative to $K_{c\times s}$, contains no copy
of $K_{1,n}$. Hence
 $d_{\overline{G}}(v)\leq n-1$ for all $v\in V(K_{c\times s})$. Since $d_{G}(v)+d_{\overline{G}}(v)=(c-1)s$, the inequality
\begin{equation}\label{1}
    d_{G}(v)\geq (c-1)s-n+1
\end{equation}
holds for all $v\in V(K_{c\times s})$. In order to check that $G$
contains a copy of $C_4$, the density argument is essentially based on two statements below. Let $\alpha$ be
the number of copies of $K_{1,2}$ in $G$. \\

\emph{Claim 1:} If $\alpha > \binom{cs}{2}$, then there is a copy of $C_4$ in $G$.

To proceed its proof, note that a copy of $K_{1,2}$ in $G$ can be
represent by a ordered pair $(v,\{u,w\})$ such that $u\neq w$ and
$v$ is adjacent to both $u$ and $w$ in $G$. If $\alpha >
\binom{cs}{2}$, the pigeonhole principle states that at least one
pair of vertices $u\neq w$ has
more than one common neighbor, forming a copy of $C_4$ in $G$. \\

\emph{Claim 2:} If $ c> \frac{ n+\sqrt{t}}{s}+1$, then $\alpha >
\binom{cs}{2}.$

For each $v\in V$, there are $\binom{d_G(v)}{2}$ possibilities of choosing a subset with two elements of its $d_G(v)$ neighbors.
Since the binomial ${x \choose 2}$ is a convex function, Jensen's inequality and Eq. (\ref{1}) yield
\begin{equation} \label{est}
\alpha=\sum_{v\in V}\binom{d_G(v)}{2} \geq |V|\binom{\left(\sum_{v\in V}d_G(v)\right)/|V|}{2} \geq cs\binom{(c-1)s-n+1}{2}.
\end{equation}
If
$$cs \binom{(c-1)s-n+1}{2}>\binom{cs}{2},$$
then Claim 1 and Eq. (\ref{est}) assure that $G$ contains a copy of $C_4$ in $G$. A simple computing reveals that the last inequality holds provided $c>\left(n+\sqrt{n+s-1}\right)/s+1.$
This concludes the proof.
\end{proof}

A computational approach shows $M_2(4)\leq 4$, while Proposition
\ref{LimSup} produces only $M_2(4)\leq 5$. However, we will see
throughout this paper that Proposition \ref{LimSup} can be optimal
for several classes.

If the parameters $s$ and $n$ satisfy suitable arithmetic conditions, then the upper bound given by Proposition \ref{LimSup} is attained, more specifically:

\begin{proposition}\label{cond_aritmetica}
Given integers $s,\, n\geq 2$, let $t=n+s-1$. Suppose that
$(n+\sqrt{t})/s$ is not an integer and
$\bigl\lceil\left(n+\sqrt{t}\right)/s\bigr\rceil \leq \bigl\lfloor
(n+1)/s\bigr\rfloor +1$. Then
$$M_s(n)=\left\lceil \frac{n+\sqrt{t}}{s} \right\rceil+1.$$
\end{proposition}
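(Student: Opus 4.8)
The plan is to establish the matching lower bound, since Proposition~\ref{LimSup} already gives the upper bound $M_s(n)\le\bigl\lceil(n+\sqrt t)/s\bigr\rceil+1$ under the stated non-integrality hypothesis. Write $c=\bigl\lceil(n+\sqrt t)/s\bigr\rceil$; I must exhibit, on the vertex set of $K_{c\times s}$ (that is, $c$ classes of size $s$), a spanning subgraph $G$ that contains no $C_4$ while its complement $\overline G$ contains no $K_{1,n}$. This amounts to producing a $C_4$-free graph on $cs$ vertices in which every vertex has $\overline G$-degree at most $n-1$, i.e.\ $d_G(v)\ge (c-1)s-(n-1)$ for all $v$, and moreover $G$ must be \emph{multipartite-compatible}: no edge inside any class $V_i$.

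First I would simplify the degree requirement. The hypothesis $\bigl\lceil(n+\sqrt t)/s\bigr\rceil\le\bigl\lfloor(n+1)/s\bigr\rfloor+1$ is precisely what forces $(c-1)s-(n-1)$ to be small — in fact I expect it to guarantee $(c-1)s-(n-1)\le 2$, so that the required minimum degree of $G$ is at most $2$. A $C_4$-free graph of maximum degree $2$ is just a disjoint union of paths and long cycles (cycles of length $\ge 5$, or isolated edges and vertices), which is trivial to build; the only real constraint is the multipartite one and keeping every degree $\ge (c-1)s-(n-1)$. So the concrete task reduces to: partition the $cs$ vertices into $c$ equal classes and place a $2$-regular (or near-$2$-regular) $C_4$-free graph crossing the classes appropriately. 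I would take a single Hamilton-type cycle through all $cs$ vertices, arranged so that consecutive vertices lie in different classes — easy since $c\ge 2$ and each class has the same size $s$ — giving a $2$-regular graph of girth $cs\ge 5$ when $cs\ge 5$, with small-case exceptions ($cs\le 4$) handled separately or excluded by $n\ge 2$ together with the arithmetic condition.

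The key steps in order: (1) unpack the arithmetic hypotheses to show $\delta:=(c-1)s-(n-1)\in\{0,1,2\}$, splitting into cases on $\delta$; (2) in the case $\delta\le 0$ take $G$ empty (then $\overline G=K_{c\times s}$ has max degree $(c-1)s\le n-1$, no $K_{1,n}$, and trivially no $C_4\subseteq G$) — though one should check whether the non-integrality condition actually permits $\delta\le 0$; (3) for $\delta=1$ take $G$ a perfect matching whose edges all cross classes (possible when $cs$ is even and $c\ge 2$; if $cs$ is odd use a near-perfect matching plus one extra short path, keeping girth $\ge5$ vacuously since there are no cycles); (4) for $\delta=2$ take $G$ a single cycle $C_{cs}$ on all vertices with consecutive vertices in distinct classes, which is $C_4$-free as long as $cs\ge 5$. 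In each case verify the two defining properties of $M_s(n)$ fail for $c$ classes, hence $M_s(n)>c-1$, i.e.\ $M_s(n)\ge c$; combined with Proposition~\ref{LimSup} this gives $M_s(n)=c+1$.

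I expect the main obstacle to be step~(1): pinning down exactly which values of $\delta$ the hypothesis $\bigl\lceil(n+\sqrt t)/s\bigr\rceil\le\bigl\lfloor(n+1)/s\bigr\rfloor+1$ allows, and making sure the boundary cases are consistent with $(n+\sqrt t)/s\notin\mathbb Z$. A secondary nuisance is the small-case bookkeeping when $cs\le 4$ (so that one cannot fit a $C_4$-free $2$-regular graph), and ensuring the cross-class arrangement of the matching or cycle is genuinely realizable for all admissible $(s,n)$ — but these are finite checks or easy counting arguments rather than anything deep. Once $\delta\le 2$ is secured, the construction itself is essentially immediate.
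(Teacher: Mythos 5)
Your proposal is correct and follows essentially the same route as the paper: the upper bound is quoted from Proposition~\ref{LimSup}, and the lower bound comes from a $2$-regular (or sparser) $C_4$-free spanning subgraph of the complete multipartite graph, with the hypothesis $\bigl\lceil(n+\sqrt t)/s\bigr\rceil\le\bigl\lfloor(n+1)/s\bigr\rfloor+1$ used exactly as you predict to force the required minimum degree $(c-1)s-(n-1)$ down to at most $2$. The only cosmetic difference is that the paper's generic construction is the disjoint union of $s$ cycles of length $c-1$ (switching to a single cross-class Hamiltonian cycle, your generic choice, only in the exceptional cases $c-1\in\{2,4\}$ and to a perfect matching when the host is $K_{2\times 2}$), and your remaining worries --- whether $\delta\le 0$ can occur and the $cs\le 4$ bookkeeping --- resolve exactly as the paper's Case~2 does.
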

\begin{proof}
Let $c=\bigl\lceil\left(n+\sqrt{n+s-1}\right)/s\bigr\rceil+1$. Proposition \ref{LimSup} implies the upper bound. For the lower bound, it is enough to
exhibit a subgraph $G$ of $K_{(c-1)\times s}$ such that $C_4\nsubseteq G$ and $K_{1,n}\nsubseteq \overline{G}$, where $\overline{G}$ is the
complement of $G$ relative to $K_{(c-1)\times s}$. The proof is divided into two cases:

\emph{Case 1: $c\notin \{2,3,5\}$}. Considering $G=sC_{c-1},$ the
disjoint union of $s$ cycles of length $c-1$, we obtain
 $G\subseteq K_{(c-1)\times s}$ and $C_4\nsubseteq G$. Further, as the subgraph $G$ is $2$-regular, the inequality in the hypotheses  gives us
  $d_{\overline{G}}(v)\leq n-1$ for all $v \in V(K_{(c-1)\times s})$. Consequently,  $K_{1,n}\nsubseteq \overline{G}$.

\emph{Case 2: $c\in \{2,3,5\}$}. For $c=2$, the result is obvious.
If $c=3$ and $s=2$, taking  a perfect matching $G$ in $K_{2\times
2}$, it is clear that   $C_4\nsubseteq G$ and $K_{1,n}\nsubseteq
\overline{G}$. For the remaining subcases, we take  a Hamiltonian
cycle $G=(V,E)$ defined by $V=V_1\cup V_2\cup \ldots \cup V_{c-1}$,
where $V_j=\{v_{j1},v_{j2},\ldots,v_{js}\}$, $1\leq j\leq c-1$;
$E=\bigcup_{l=1}^{s}\left\{v_{jl}v_{(j+1)l}\  :  \ 1\leq j\leq
c-2\right\}\cup \left\{v_{(c-1)k}v_{1(k+1)}\  :  \ 1\leq k\leq
s-1\right\}\cup \{v_{(c-1)s}v_{11}\}$. Thus, $G$ is a 2-regular
graph that does not contain a copy of $C_4$ and, as in the Case 1,
$d_{\overline{G}}(v)\leq n-1$ for all $v\in V(K_{(c-1)\times s})$.

The prove is complete.
\end{proof}

We are now in a position to show Theorem A.

\begin{proof}[{\bf Proof of Theorem A}]
Write $n=ks+a$ and $t=n+s-1$. The inequalities $-1\leq a\leq \lfloor s/2\rfloor-1$ imply $\lfloor (n+1)/s\rfloor +1 = k+1$. On the other hand, the hypothesis $1 \leq k\leq s-(2a+1)$ assures that $0< a + \sqrt{(k+1)s+a-1}\leq s$. These facts above produce
$$\left\lceil\frac{n+\sqrt{t}}{s}\right\rceil=k+\left\lceil\frac{a+\sqrt{(k+1)s+a-1}}{s}\right\rceil = k+1 = \left\lfloor\frac{n+1}{s}\right\rfloor + 1,$$
and the statement follows as an application of Proposition \ref{cond_aritmetica}.
\end{proof}


\section{Lower bounds from Ramsey numbers}\label{SecaoExatas}

For our purpose, we report only a few of the well-known results on $r(n)$.

\begin{theorem}\label{Exatas} Let $q$ be a prime power. The following exact classes hold:
\begin{enumerate}
    \item [(1)] $r(q^2+k)=q^2+q+1+k$, where $k\in\{0,1\}$ (\cite{Parsons});
    \item [(2)] $r(q^2-2)=q^2+q-1$ if $q\geq 3$ (\cite{Wu});
    \item [(3)] $r(q^2-q-1)=q^2$ if $q$ is even (\cite{Pa0});
    \item [(4)] $r(q^2-k-1)=q^2+q-k$ for even $q\geq 4$, where $0\leq k\leq q$ except $k\in\{1,q-1\}$ (\cite{Pa0});
    \item [(5)] $r((q-1)^2+k)=(q-1)^2+q+k$ for even $q\geq 4$, $k\in\{-2,0,1\}$ (\cite{Z2,m});
    \item [(6)] $r(q^2-k)=q^2+q-(k-1)$ for odd $q$, where $1\leq k\leq 2\lceil q/4\rceil$ and $k\neq 2\lceil q/4\rceil - 1$ (\cite{Pa0,Z1}).
    \end{enumerate}
\end{theorem}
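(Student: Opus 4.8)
This statement collects exact values of $r(n)=r(C_4;K_{1,n})$ that already appear in \cite{Parsons,Wu,Pa0,Z1,Z2,m}, so my plan is not to devise new arguments but to recall the uniform two‑sided scheme that underlies all six items and to organize the write‑up around it. The starting point is the reformulation: $r(n)\le c$ if and only if every graph $H$ on $c$ vertices with $C_4\nsubseteq H$ satisfies $\delta(H)\le c-1-n$ (otherwise every vertex of $\overline H$ would have degree $\ge n$, forcing $K_{1,n}\subseteq\overline H$); equivalently, $r(n)>c$ exactly when there exists a $C_4$‑free graph on $c$ vertices with minimum degree at least $c-n$. Thus each item splits into a density upper bound and an explicit $C_4$‑free construction with prescribed minimum degree.

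For the upper bounds I would run the counting argument already present in the excerpt: applying Proposition~\ref{LimSup} with $s=1$ gives Parsons' crude bound $r(n)\le n+\lceil\sqrt n\rceil+1$, which for $n$ near $q^2$, $q^2-q$ or $(q-1)^2$ is either the claimed value or exceeds it by one. To shave that extra $+1$ one assumes $C_4\nsubseteq H$ and $\delta(H)\ge c-n$ with $c$ the stated value, notes $\sum_v\binom{d_H(v)}{2}\le\binom c2$ (each pair of vertices has at most one common neighbour) together with the convexity bound $\sum_v\binom{d_H(v)}{2}\ge c\binom{c-n}{2}$, and observes that the two inequalities can coexist only when $H$ is essentially $q$‑ or $(q+1)$‑regular and $C_4$‑free, hence (a subgraph of) a polarity graph of a projective plane of order about $q$. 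A short divisibility/parity check on $c$ and $n$ then rules this out for items (2)–(6), exactly as in \cite{Wu,Pa0,Z1,Z2}; item (1) is Parsons' original near‑square case.

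For the lower bounds the engine is the Erd\H{o}s–R\'enyi polarity graph $ER_q$ of $PG(2,q)$: it has $q^2+q+1$ vertices, is $C_4$‑free, has $q+1$ absolute points of degree $q$, and all other vertices of degree $q+1$. Item (1) with $k=0$ comes from deleting one absolute point (chosen so that no other absolute point lies on its polar, which holds because the tangent at a point of a conic meets the conic only there), and $k=1$ from adjoining one more vertex of degree $0$. Items (2), (4), (6) are obtained by deleting a carefully selected independent set of absolute points together with a few incident edges so that the minimum degree decreases in step with $n$, while the ``$(q-1)^2$'' classes (3), (5) arise from the subgraph of $ER_q$ induced by the non‑neighbours of a fixed point (equivalently, by removing a line and its absolute point), which is $C_4$‑free on about $(q-1)^2$ vertices with the required minimum degree; these are the constructions of Parsons and Zhang in \cite{Pa0,Z1,Z2,m}. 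The main obstacle — and the reason the literature treats each case separately — is precisely this lower‑bound bookkeeping: one must specify which absolute points, lines and edges to delete so that \emph{simultaneously} no $C_4$ is created and every surviving vertex keeps degree at least $c-n$, and this requires the fine incidence geometry of $PG(2,q)$ (the absolute points forming a conic in odd characteristic versus a line in even characteristic, adjacency among absolute points, etc.), which is exactly what produces the even‑$q$ versus odd‑$q$ dichotomy and the admissible ranges of $k$ in items (3)–(6). Since all of this is carried out in the cited papers, the write‑up would cite them and reproduce only the common skeleton above.
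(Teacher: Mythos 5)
The paper contains no proof of Theorem~\ref{Exatas}: it is a compendium of previously published exact values of $r(n)$, each item quoted with a citation to its source (\cite{Parsons,Wu,Pa0,Z1,Z2,m}), and the paper simply uses these as black boxes in Section~\ref{SecaoExatas}. So your decision to cite the literature and reproduce only a common skeleton coincides with the paper's own treatment, and the skeleton itself identifies the right machinery: the equivalence of $r(n)>c$ with the existence of a $C_4$-free graph on $c$ vertices of minimum degree at least $c-n$, the counting bound $\sum_v\binom{d(v)}{2}\le\binom{c}{2}$ for the upper estimates, and induced subgraphs of the polarity graph of $PG(2,q)$ for the lower ones.

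Two concrete slips in your sketch should be repaired if it is kept. First, for item (1) with $k=1$ you propose adjoining an isolated vertex to the $(q^2+q)$-vertex witness; that vertex has degree $q^2+q\ge q^2+1$ in the complement, so the complement contains $K_{1,q^2+1}$ and the construction fails. The correct witness for $r(q^2+1)>q^2+q+1$ is the \emph{full} polarity graph on $q^2+q+1$ vertices, whose minimum degree $q$ equals $(q^2+q+1)-1-q^2$, exactly the threshold required. Second, item (3) concerns $n=q^2-q-1=q(q-1)-1$ and is in fact the $k=q$ member of the family in item (4), i.e., it belongs to the ``$n$ near $q^2$ or $q(q-1)$'' regime of \cite{Pa0}; grouping it with item (5) as a ``$(q-1)^2$ class'' conflates two different parameter ranges and two different induced subgraphs of the polarity graph. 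Neither slip changes the verdict, since the actual proofs live in the cited papers exactly as the paper assumes, but as written the skeleton would mislead a reader trying to reconstruct items (1) and (3).
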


As mentioned in Introduction, the following link between $r(n)$ and
$M_s(n)$ is very useful.

\begin{proposition}\label{lower}(\cite{Pablo})
For all integers $s\geq 1$ and $n\geq 2$,
$$\left\lfloor\frac{r(n)-1}{s}\right\rfloor +1 \leq M_s(n).$$
\end{proposition}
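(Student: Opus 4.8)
The plan is to establish the lower bound $M_s(n) \geq \lfloor (r(n)-1)/s \rfloor + 1$ by producing, for $c = \lfloor (r(n)-1)/s \rfloor$, a subgraph $G$ of $K_{c\times s}$ that witnesses $M_s(n) > c$, i.e., one with $C_4 \nsubseteq G$ and $K_{1,n} \nsubseteq \overline{G}$ (complement taken relative to $K_{c\times s}$). The natural source for such a graph is the extremal configuration underlying the graph Ramsey number $r(n)$ itself: by definition of $r(n)$, there is a subgraph $H$ of $K_{r(n)-1}$ with no $C_4$ and whose complement (relative to $K_{r(n)-1}$) has no $K_{1,n}$; equivalently, $H$ is $C_4$-free and every vertex of $H$ has degree at least $r(n)-1-n$ in $H$.

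First I would set $N = r(n)-1$ and note $cs \leq N$, so $K_{c\times s}$ has $cs \leq N$ vertices. The idea is to take any $cs$ vertices of the extremal graph $H$ on $N$ vertices and let $G$ be the induced subgraph $H[W]$ on a chosen $W$ with $|W| = cs$, then distribute these $cs$ vertices into the $c$ parts $V_1, \dots, V_c$ of size $s$ arbitrarily. Since $G = H[W] \subseteq H$ and $H$ is $C_4$-free, $G$ is $C_4$-free, and since $G \subseteq K_{c\times s}$ (it has $cs$ vertices split into parts — any induced subgraph of a complete graph embeds in any complete multipartite graph on the same vertex set, because we only remove edges), we get $G \subseteq K_{c\times s}$ with $C_4 \nsubseteq G$. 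The remaining point is to control $d_{\overline{G}}(v)$ for $v \in W$: here $\overline{G}$ is the complement within $K_{c\times s}$, so $d_{\overline{G}}(v) = (\text{number of non-neighbors of } v \text{ in } G \text{ that lie in a different part}) \leq cs - 1 - d_G(v)$.

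The key computation is then to check $d_{\overline{G}}(v) \leq n-1$ for all $v$, which would give $K_{1,n} \nsubseteq \overline{G}$. We have $d_G(v) = d_{H[W]}(v) \geq d_H(v) - (N - cs)$, since passing from $H$ to $H[W]$ deletes $N - cs$ vertices, each killing at most one edge at $v$. Combined with $d_H(v) \geq N - n$ (the extremal degree bound) this yields $d_G(v) \geq N - n - (N - cs) = cs - n$, hence $d_{\overline{G}}(v) \leq cs - 1 - (cs - n) = n - 1$, exactly as needed. This shows $\overline{G}$ has no vertex of degree $n$, so no $K_{1,n}$, completing the argument that $c$ vertices-per-... wait, that $c$ classes do not suffice, i.e. $M_s(n) > c$, which is the claim.

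I expect the main (minor) obstacle to be bookkeeping about which complement is meant and making the degree-loss estimate clean: one must be careful that $d_{\overline{G}}(v)$ counts only non-edges to other parts, so the bound $d_{\overline{G}}(v) \leq cs - 1 - d_G(v)$ is an inequality (equality would hold only if $v$'s part is a singleton), which is fine since we only need an upper bound. A secondary subtlety is the edge case where $r(n) - 1 < s$, making $c = 0$; then the inequality $M_s(n) \geq 1$ is trivial, so we may assume $c \geq 1$. No structural properties of the extremal $H$ beyond "$C_4$-free" and "minimum degree $\geq N - n$" are used, so the argument is purely counting and the whole proof should be short.
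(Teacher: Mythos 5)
Your argument is essentially correct, and it takes a genuinely different route from the paper: the paper's entire proof of Proposition \ref{lower} is a one-line citation of an external result (\cite[Theorem 3]{Pablo}), whereas you give a direct, self-contained derivation by transplanting a Ramsey-extremal graph for $r(n)$ into the multipartite host. What your version buys is self-containedness at the cost of a short counting argument, and it uses nothing about the extremal graph $H$ on $N=r(n)-1$ vertices beyond $C_4$-freeness and the minimum-degree bound $d_H(v)\geq N-n$; what the citation buys the authors is brevity and (presumably) a more general statement in the reference.

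One step of yours is mis-justified, though the computation that follows already repairs it. You assert that $G=H[W]$, with its $cs$ vertices split arbitrarily into parts, is a subgraph of $K_{c\times s}$ ``because we only remove edges.'' This is backwards: $K_{c\times s}$ is obtained from $K_{cs}$ by \emph{deleting} the within-part edges, so a subgraph of $K_{cs}$ need not sit inside $K_{c\times s}$ --- any edge of $H[W]$ joining two vertices of the same part is an obstruction. The correct witness is $G'=H[W]\cap K_{c\times s}$, i.e., $H[W]$ with its within-part edges discarded; $G'$ is still $C_4$-free, and its complement relative to $K_{c\times s}$ is exactly the graph $\overline{G}$ whose degrees you bound. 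Indeed your inequality $d_{\overline{G}}(v)\leq cs-1-d_{H[W]}(v)$ is valid for this complement, because the $s-1$ same-part vertices excluded from $d_{\overline{G}}(v)$ compensate for the at most $s-1$ edges at $v$ lost in passing from $H[W]$ to $G'$. Combined with $d_{H[W]}(v)\geq (N-n)-(N-cs)=cs-n$ this gives $d_{\overline{G}}(v)\leq n-1$, so $M_s(n)>c=\bigl\lfloor (r(n)-1)/s\bigr\rfloor$ as claimed. Replace the false embedding sentence by the intersection $H[W]\cap K_{c\times s}$ and the proof is complete.
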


\begin{proof}
It is a consequence of \cite[Theorem 3]{Pablo}.
\end{proof}

Let us discuss the impact of the results above. To illustrate the first consequence, certain numbers
$M_s(n)$ can be obtained up to an error of 1.

\begin{proposition}\label{ramsey3}
Let $k\in\{0,1\}$. Given an integer $s\geq 2$ and a prime power $q\geq 4$ such that $s\leq 2(q+1)-k$, denote $L_k=(q^2+k+\sqrt{q^2+k+s-1})/s.$
If $L_k$ is not an integer,
then $\lceil L_k \rceil \leq M_s(q^2+k)\leq \lceil L_k \rceil+1.$
\end{proposition}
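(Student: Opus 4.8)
The plan is to derive both inequalities by combining the three tools already available: the density upper bound (Proposition \ref{LimSup}), the Ramsey lower bound (Proposition \ref{lower}), and the known exact value $r(q^2+k)=q^2+q+1+k$ from Theorem \ref{Exatas}(1). The upper bound is the easy half. With $n=q^2+k$ and $t=n+s-1$, we have $\sqrt{t}\le\sqrt{q^2+1+s-1}$, and since $L_k$ is assumed not to be an integer, the first branch of Proposition \ref{LimSup} is excluded; the second branch gives directly $M_s(q^2+k)\le\lceil (n+\sqrt{t})/s\rceil+1=\lceil L_k\rceil+1$. Strictly I should double-check that when $\sqrt t\in\mathbb Z$ but $s\nmid(n+\sqrt t)$ we still land in the second branch — which we do — so the only case to rule out is $\sqrt t\in\mathbb Z$ together with $s\mid(n+\sqrt t)$, and that case would force $L_k$ to be an integer, contrary to hypothesis. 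Hence the upper bound holds with no further work.

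For the lower bound I would feed $r(q^2+k)=q^2+q+1+k$ into Proposition \ref{lower}, obtaining
$$M_s(q^2+k)\ \ge\ \left\lfloor\frac{q^2+q+k}{s}\right\rfloor+1.$$
The remaining task is purely arithmetic: show that under the hypotheses (in particular $s\le 2(q+1)-k$, equivalently $q+1\ge(s+k)/2$, and $L_k\notin\mathbb Z$) one has
$$\left\lfloor\frac{q^2+q+k}{s}\right\rfloor+1\ \ge\ \left\lceil\frac{q^2+k+\sqrt{q^2+k+s-1}}{s}\right\rceil\ =\ \lceil L_k\rceil.$$
Writing $q^2+k = ms + b$ with $0\le b\le s-1$, the left side is $m+1+\lfloor (q+b)/s\rfloor$ and the right side is $m+\lceil (b+\sqrt{q^2+k+s-1})/s\rceil$, so the claim reduces to comparing $1+\lfloor(q+b)/s\rfloor$ with $\lceil(b+\sqrt{q^2+k+s-1})/s\rceil$. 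The bound $s\le 2(q+1)-k$ is exactly what is needed to control $\sqrt{q^2+k+s-1}<q+ (\text{something}\le s)$, forcing the ceiling on the right to be at most $1+\lfloor(q+b)/s\rfloor$; I expect that $\sqrt{q^2+k+s-1}\le q + \lfloor(s+k-1)/(2q)\rfloor +1$ type estimates, combined with $0\le b\le s-1$, close the gap. The non-integrality of $L_k$ is what prevents the two sides from being forced apart by a rounding mismatch at the boundary.

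The main obstacle will be this last arithmetic comparison: the floor and ceiling functions interact with the residue $b=(q^2+k)\bmod s$ in a way that is not uniform, and one has to verify the inequality holds for \emph{every} residue class rather than generically. I anticipate splitting into the cases $b+q < s$, $s\le b+q<2s$, and so on, or equivalently tracking $\lfloor(q+b)/s\rfloor\in\{0,1,\dots\}$, and in each regime bounding $\sqrt{q^2+k+s-1}$ from above by an explicit linear expression in $q$ and $s$ using $s\le 2(q+1)-k$. The hypothesis $q\ge 4$ presumably kills a few small sporadic failures. Once the inequality $\lfloor(q^2+q+k)/s\rfloor+1\ge\lceil L_k\rceil$ is established in all cases, combining it with the density upper bound yields the sandwich $\lceil L_k\rceil\le M_s(q^2+k)\le\lceil L_k\rceil+1$, completing the proof.
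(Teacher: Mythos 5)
Your plan is exactly the paper's: upper bound from Proposition \ref{LimSup} (with non-integrality of $L_k$ ruling out the first branch), lower bound from Proposition \ref{lower} together with $r(q^2+k)=q^2+q+1+k$, and then the arithmetic comparison $\lfloor(q^2+q+k)/s\rfloor+1\geq\lceil L_k\rceil$. The one thing you leave open --- which you yourself flag as ``the main obstacle'' and propose to attack by a case analysis over the residue $b$ and the regimes of $\lfloor(q+b)/s\rfloor$ --- is where your write-up stops short of a proof, and the anticipated case analysis is much heavier than what is needed. The hypothesis $s\leq 2(q+1)-k$ gives directly $t=q^2+k+s-1\leq q^2+2q+1=(q+1)^2$, hence $\sqrt{t}\leq q+1$ and
$$L_k=\frac{q^2+k+\sqrt{t}}{s}\leq\frac{(q^2+q+k)+1}{s}.$$
Now for \emph{any} integer $M$ one has $\lceil (M+1)/s\rceil=\lfloor M/s\rfloor+1$ (write $M=as+r$ with $0\leq r<s$; then $M+1=as+(r+1)$ with $1\leq r+1\leq s$). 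Applying this with $M=q^2+q+k$ yields $\lceil L_k\rceil\leq\lfloor(q^2+q+k)/s\rfloor+1\leq M_s(q^2+k)$ in one line, uniformly in the residue class; no splitting into cases and no appeal to $q\geq 4$ is required for this step. This is precisely how the paper closes the argument (it treats $k=0$ and notes $k=1$ is analogous). So your proposal is the right route but incomplete as written; once you replace the sketched residue analysis by the two observations above, it coincides with the paper's proof.
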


\begin{proof}
We prove only the case $k=0$. The case where $k=1$ follows
analogously. A combination of Proposition \ref{lower}, Theorem
\ref{Exatas} (1), and Proposition \ref{LimSup} yields
$$\left\lfloor \frac{q^2+q}{s} \right\rfloor +1 \leq M_s(q^2)\leq \bigl\lceil L_0\bigr\rceil +1.$$
Both bounds are very tight. Indeed, the hypothesis $s\leq 2(q+1)$ assures that $q^2+s-1\leq (q+1)^2$. Moreover, there exist unique $a$ and $r$ such that
 $q^2+q=as+r$, with $0\leq r<s$. These facts above produce
$$\bigl\lceil L_0 \bigr\rceil \leq \left\lceil \frac{q^2+q+1}{s} \right\rceil \leq \left\lceil \frac{(a+1)s}{s} \right\rceil = a+1
 \leq \left\lfloor \frac{q^2+q}{s} \right\rfloor +1,$$
and the result is proved. In fact, a simple argument shows that $\bigl\lceil L_0\bigr\rceil = \bigl\lfloor (q^2+q)/s\bigr\rfloor +1$.
\end{proof}

The result above reflects that either Proposition \ref{lower} or
Proposition \ref{LimSup} is optimal for those parameters.
Although the gap does not exceed $1$, Theorems B and C reveal that
both propositions are sharp for suitable classes.

\begin{proof}[{\bf Proof of Theorem B}]  Let  $s\geq 3$ be an odd integer and  $m$  a positive integer such that $4^m\equiv 1 \, ({\rm mod}\, s)$ and $2^{m}\geq s$.
Write $q=2^{mk}$, $n=q^2-q-1,$ and $t=n+s-1$. Since $s\mid (q^2-1)$, Proposition \ref{lower} and Theorem \ref{Exatas}(3) produce the lower bound
$$\frac{q^2-1}{s} + 1 \leq M_s(n).$$
It remains the upper bound. Since $3\leq s\leq q$, the inequalities $q-1 < \sqrt{t} <q$ hold, and consequently $\sqrt{t}$ is not an integer. The last
inequalities and Proposition \ref{LimSup} produce
$$M_s(n)\leq \frac{q^2-1}{s} + 1 + \left\lceil \frac{-q+\sqrt{t}}{s} \right\rceil = \frac{q^2-1}{s} + 1,$$
completing the proof.
\end{proof}

Given an integer $s\geq 1$, recall that the Euler phi function $\phi(s)$ is defined as the number of integers $a$ in
the range $1\leq a\leq s$ in which $gcd(a,s)=1$, where $gcd(a,s)$ denotes the greatest common divisor of $a$ and $s$.

\begin{corollary}\label{ramsey1b}
Let $s\geq 3$ be an odd integer. For any positive integer $k$,
$$M_s(2^{2\phi(s)k}-2^{\phi(s)k}-1)=\frac{2^{2\phi(s)k}-1}{s}+1.$$
\end{corollary}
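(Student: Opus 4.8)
The plan is to obtain Corollary~\ref{ramsey1b} as the particular instance $m=\phi(s)$ of Theorem~B. Consequently, the whole task reduces to checking that, for every odd $s\geq 3$, the choice $m=\phi(s)$ is a positive integer satisfying the two hypotheses of Theorem~B, namely $4^{m}\equiv 1\,({\rm mod}\,s)$ and $2^{m}\geq s$. The first of these is immediate: since $s$ is odd we have $\gcd(2,s)=1$, hence $\gcd(4,s)=1$, and Euler's theorem gives $4^{\phi(s)}\equiv 1\,({\rm mod}\,s)$. Note also that $\phi(s)\geq 2>0$, so $m=\phi(s)$ is indeed a valid positive exponent.

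The only genuinely computational point is the inequality $2^{\phi(s)}\geq s$ for odd $s\geq 3$, equivalently $\phi(s)\geq \log_2 s$. One clean route is to write $s=p_1^{a_1}\cdots p_r^{a_r}$ with the $p_i$ distinct odd primes, to verify the elementary estimate $\phi(p^{a})=p^{a-1}(p-1)\geq \log_2(p^{a})$ (a short induction on $a$, using $p\geq 3$ for the base case $p-1\geq\log_2 p$ and the step $\phi(p^{a})=p\,\phi(p^{a-1})$), and then to observe that $\phi(s)=\prod_{i}\phi(p_i^{a_i})$ is a product of $r\geq 1$ factors each at least $2$, so it dominates their sum: $\phi(s)\geq\sum_{i}\phi(p_i^{a_i})\geq\sum_{i}\log_2(p_i^{a_i})=\log_2 s$. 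Alternatively, one may invoke the classical bound $\phi(s)\geq\sqrt{s}$ together with $\sqrt{s}\geq\log_2 s$ for $s\geq 16$, and simply check the finitely many odd $s$ with $3\leq s\leq 15$ by direct inspection.

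Once both hypotheses of Theorem~B have been verified for $m=\phi(s)$, the conclusion of that theorem, $M_s\!\left(2^{2mk}-2^{mk}-1\right)=\left(2^{2mk}-1\right)/s+1$, becomes, after substituting $m=\phi(s)$, precisely the claimed identity, valid for every positive integer $k$. I do not expect a real obstacle in this argument; the proof is essentially a substitution into Theorem~B, and the sole place requiring a little care is the bound $2^{\phi(s)}\geq s$, whose small cases are trivial and whose large-$s$ behaviour is very comfortable.
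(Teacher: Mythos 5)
Your proposal is correct and follows the same route as the paper: both obtain the corollary by specializing Theorem~B at $m=\phi(s)$ and checking its two hypotheses, with the congruence $4^{\phi(s)}\equiv 1 \pmod{s}$ coming from Euler's theorem exactly as you argue. The only point of divergence is the inequality $2^{\phi(s)}\geq s$, which you single out as ``the only genuinely computational point'' and prove via a multiplicative estimate ($\phi(p^a)\geq\log_2 p^a$ for odd $p$, then product dominates sum), or alternatively via $\phi(s)\geq\sqrt{s}$ plus a finite check. That argument is valid, but it is heavier than needed: the inequality is an immediate consequence of the congruence you have already established. Since $s\mid\bigl(2^{\phi(s)}-1\bigr)$ and $2^{\phi(s)}-1\geq 1$ (as $\phi(s)\geq 1$), the positive multiple $2^{\phi(s)}-1$ of $s$ satisfies $2^{\phi(s)}-1\geq s$, hence $2^{\phi(s)}>s$. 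This is how the paper disposes of the hypothesis in one line, and it is worth internalizing: any time $a^m\equiv 1\pmod{s}$ with $a^m>1$, you automatically get $a^m\geq s+1$. Nothing in your write-up is wrong, and your number-theoretic detour even yields slightly more information about the growth of $\phi$, but for the purpose at hand it buys nothing over the divisibility observation.
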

\begin{proof}
Since $gcd(s,2)=1$, Euler's Theorem on congruence states that $2^{\phi(s)}\equiv 1 \, ({\rm mod} \, s)$, and consequently, $4^{\phi(s)}\equiv 1 \, ({\rm mod} \, s)$
 and $2^{\phi(s)}\geq s.$ The proof follows as an immediate application of Theorem B.
\end{proof}

\begin{proof}[{\bf Proof of Theorem C}]
First let us prove item (2). Let $q\geq 4$ be a power of $2$ and $k$  an odd integer such that $3\leq k\leq q-3$. Put $s=2$ and $n=q^2-k-1$. The inequalities
$q-1<\sqrt{q^2-k}<q$ reveals that $\sqrt{q^2-k}$ is not an integer.
A combination of Proposition \ref{lower}, Theorem \ref{Exatas}(4),
Proposition \ref{LimSup} and the last inequality yields
$$ \left\lfloor \frac{n+q}{2} \right\rfloor +1  \leq M_2(n) \leq \left\lceil \frac{n+\sqrt{q^2-k}}{2} \right\rceil+1 \leq
\left\lceil \frac{n+q}{2} \right\rceil+1.$$
The choice of $k$ assures that $n+q$ is even and, consequently, $2\mid (n+q)$, proving the statement.

A similar reasoning shows items (1), (3) and (4); their proofs use,
respectively,  parts (2), (5) and (6) of Theorem \ref{Exatas}.
\end{proof}


\section{Projective plane }\label{polarity}

Typically, the works \cite{Brown, Erdos, Parsons, Pa0, Wu, Z1} make
use of ``homogeneous coordinates'' to describe projective planes as
well as polarity graphs. Instead of that approach, we explore
constructions based on ``cartesian coordinates''. The reader is
referred to \cite{HP} for definitions not included in this paper and for
further information on projective planes and their polarities.

Given a prime power $q$, let $\mathbb{F}_q$ denote a finite field
with $q$ elements. Consider the geometric structure
$\Pi_{q}=(X,\mathcal{B})$ formed by the set of \emph{points}
$$ X=(\mathbb{F}_q\times\mathbb{F}_q)\cup (\{q\}\times \mathbb{F}_q)\cup \{(q,q)\}$$
and the set of \emph{lines} $\mathcal{B}$ which are indexed by the points according to the rule:
for each $(x,y)\in \mathbb{F}_q\times\mathbb{F}_q$ and for each $z\in\mathbb{F}_q$, define

\begin{equation} \label{linhas}
\begin{array}{l}
   B_{(x,y)}=\{(k,kx-y) \,:\,  k\in\mathbb{F}_q\}\cup\{(q,x)\}, \\
   B_{(q,z)}=\{z\}\times \mathbb{F}_q\cup \{(q,q)\} \\
   B_{(q,q)}=\{q\}\times \mathbb{F}_q\cup\{(q,q)\}. \\
\end{array}
\end{equation}

\begin{lemma} \label{plano} The structure $\Pi_{q}=(X,\mathcal{B})$ is a projective plane of order $q$.
\end{lemma}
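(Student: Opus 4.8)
The plan is to verify the three axioms of a projective plane of order $q$ directly from the coordinate description in~\eqref{linhas}: namely, (i) any two distinct points lie on a unique common line, (ii) any two distinct lines meet in a unique common point, and (iii) there exist four points in general position (no three collinear), together with the count $|X|=|\mathcal{B}|=q^2+q+1$ and the fact that each line has $q+1$ points. Since the lines are indexed by the points via $B_{(\cdot)}$, establishing a duality will let me halve the work: I would first observe that $(u,v)\in B_{(x,y)}$ is governed by a symmetric-looking incidence relation, so that incidence of the point $P$ with the line $B_Q$ is equivalent to incidence of $P$ with $B_Q$ under swapping roles — more precisely, I would check that $P\in B_Q \iff Q\in B_P$ for all points $P,Q$, which is exactly the statement that the map $P\mapsto B_P$ is a polarity. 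Granting this, axiom (ii) follows from axiom (i) by dualizing.

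First I would set up the point classes explicitly: the ``affine'' points $(a,b)$ with $a,b\in\mathbb{F}_q$ (there are $q^2$ of them), the ``points at infinity'' $(q,z)$ with $z\in\mathbb{F}_q$ (there are $q$), and the single point $(q,q)$; total $q^2+q+1$. Then I would prove axiom (i) by cases on the types of the two given points. For two affine points $(a_1,b_1)\neq(a_2,b_2)$: if $a_1\neq a_2$, the condition $(a_i,b_i)\in B_{(x,y)}$ reads $b_i = a_i x - y$, a linear system in $(x,y)$ with matrix $\begin{pmatrix}a_1 & -1\\ a_2 & -1\end{pmatrix}$ of determinant $a_2-a_1\neq 0$, giving a unique solution in $\mathbb{F}_q\times\mathbb{F}_q$ and hence a unique affine-indexed line through them (one must also check no line $B_{(q,z)}$ or $B_{(q,q)}$ works, which is immediate since those contain affine points of the form $(z,\cdot)$ only); if $a_1=a_2$ but $b_1\neq b_2$, the unique line is $B_{(q,a_1)}$. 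For an affine point and a point at infinity $(q,z)$: either $z\in\mathbb{F}_q$, forcing the line $B_{(q,z)}$ if the affine point has first coordinate $z$, or else the unique line is the $B_{(x,y)}$ with $x=$ the ``slope'' determined by $(q,z)$, solved from $z = x$ and $b = ax - y$. For an affine point and $(q,q)$: the unique line is $B_{(q,q)}$ if the first coordinate is $q$ — impossible for affine — so instead it is the unique $B_{(x,y)}$ with $(q,x)$ on it, i.e. pick $x$ freely? No: here I would use that $B_{(q,q)}$ consists of all $(q,z)$ plus $(q,q)$, so an affine point is never on it; the connecting line must be some $B_{(x,y)}$ containing $(q,x)$, and $(q,q)\in B_{(x,y)}$ never holds, so in fact the correct connecting line is handled by noting $(q,q)$ lies only on lines $B_{(q,z)}$ and $B_{(q,q)}$ — I would recheck the incidence list carefully here. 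For two points at infinity, or a point at infinity and $(q,q)$: the common line is $B_{(q,q)}$. In each subcase I would confirm existence and uniqueness.

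The main obstacle I anticipate is bookkeeping: making sure the case analysis is exhaustive and that in every case exactly one line qualifies, with no double-counting across the three families in~\eqref{linhas}. The cleanest route, which I would pursue to avoid a sprawling argument, is to prove the polarity/symmetry claim $P\in B_Q \iff Q\in B_P$ first (a short computation splitting on which of the three families $P$ and $Q$ belong to), prove each line has exactly $q+1$ points by inspection of~\eqref{linhas}, prove axiom (i) as above, and then derive axiom (ii) formally: if lines $B_P$ and $B_Q$ shared two points $R,S$, then by symmetry $P,Q$ would both lie on $B_R$ and on $B_S$, contradicting axiom (i) applied to $R\neq S$ — and existence of a common point follows by a counting argument (a fixed line $B_P$ has $q+1$ points, through each passes $q+1$ lines counted with the incidence above, and $|\mathcal{B}|=q^2+q+1$ forces every other line to meet $B_P$). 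Finally I would exhibit a quadrilateral, e.g. $(0,0),(1,0),(0,1),(q,q)$ or a similar explicit choice, and check no three are collinear using the incidence relations already established. This completes verification of all axioms, so $\Pi_q$ is a projective plane of order $q$.
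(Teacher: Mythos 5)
Your overall strategy --- verify the three axioms directly, and use the symmetry $P\in B_Q \iff Q\in B_P$ to derive Axiom 2 from Axiom 1 --- is sound and close to the paper's (which also verifies the axioms case by case, and proves that symmetry separately in Lemma \ref{polaridade}). However, several of your subcases contain concrete errors traceable to a single confusion: you conflate the \emph{line} $B_{(q,z)}=\{z\}\times\mathbb{F}_q\cup\{(q,q)\}$ with the set of lines \emph{through the point} $(q,z)$. From Eq.~(\ref{linhas}), the point $(q,z)$ with $z\in\mathbb{F}_q$ lies exactly on the lines $B_{(z,y)}$, $y\in\mathbb{F}_q$, and on $B_{(q,q)}$; it does \emph{not} lie on $B_{(q,z)}$. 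Consequently: (i) for an affine point $(a,b)$ and the point $(q,z)$, the unique connecting line is $B_{(z,az-b)}$ in \emph{all} cases --- your first branch, which asserts the line is $B_{(q,z)}$ when $a=z$, is false; (ii) the case of an affine point $(x_1,y_1)$ and $(q,q)$, which you leave unresolved (``I would recheck the incidence list carefully here''), has the clean answer $B_{(q,x_1)}$, because $(q,q)$ lies only on the lines $B_{(q,w)}$ with $w\in\mathbb{F}_q\cup\{q\}$, and among these only $B_{(q,x_1)}$ contains $(x_1,y_1)$.

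A second concrete error: your proposed quadrilateral $(0,0),(1,0),(0,1),(q,q)$ fails, because $(0,0)$, $(0,1)$ and $(q,q)$ all lie on the vertical line $B_{(q,0)}=\{0\}\times\mathbb{F}_q\cup\{(q,q)\}$. The paper uses $\{(0,0),(1,1),(q,0),(q,q)\}$, which does work. On the positive side, your idea of proving the incidence symmetry first and formally dualizing Axiom 1 into Axiom 2 (plus a counting argument for existence of the intersection point) is more economical than the paper, which merely asserts that Axiom 2 ``is similar''; the symmetry is established independently of the plane axioms, so there is no circularity. But as written, the case analysis for Axiom 1 and the quadrilateral both need to be repaired before the argument goes through.
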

\begin{proof} We have to verify that $\Pi_{q}$ satisfies the three axioms below.

\emph{Axiom 1:} Any two distinct points lie on precisely one line.
Its proof is divided into four situations: \emph{Case 1}: if both points $P=(x_1,y_1)$ and $Q=(x_2,y_2)$ belong to $\mathbb{F}_q\times\mathbb{F}_q$.
When $x_1=x_2$, the only line containing them is $B_{(q,x_1)}$. Suppose $x_1\neq x_2$. Note that $(x_1,y_1)\in B_{(a,b)}$ if and only if $y_1=x_1a-b$,
and $(x_2,y_2)\in B_{(a,b)}$ if and only if $y_2=x_2a-b$. The unique solution $a=(y_1-y_2)/(x_1-x_2)$ and $b=(y_1-y_2)/(x_1-x_2)x_1-y_1$ reveals that
$B_{(a,b)}$ is the only line containing both $P$ and $Q$.
\emph{Case 2}: if $P=(x_1,y_1)\in\mathbb{F}_q\times\mathbb{F}_q$ and $Q=(q,z)\in\{q\}\times\mathbb{F}_q$. In this case, $B_{(z,x_1z-y_1)}$ is the  only line that
 contains $P$ and $Q$.
\emph{Case 3}: if $P=(x_1,y_1)\in\mathbb{F}_q\times\mathbb{F}_q$ and $Q=(q,q)$, then the only line containing them is $B_{(q,x_1)}$.
\emph{Case 4}: Finally, $(q,z)\in\{q\}\times\mathbb{F}_q$ and $(q,q)$ lie on the line $B_{(q,q)}.$

\emph{Axiom 2:}  Any two distinct lines meet in a unique point.
The proof is similar to that presented in Axiom 1.

\emph{Axiom 3:} There exists a quadrilateral.
Indeed, consider the set of points $C=\{(0,0),(1,1),(q,0),(q,q)\}$.
By simple inspection, any three points in $C$ are not incident with
a common line.

Therefore $\Pi_{q}$ is a projective plane of order $q$.
\end{proof}

\begin{remark}\label{remark1}
In view of Eq. (\ref{linhas}), note that $B_{(q,q)}$ corresponds to the \emph{line at infinity} composed by the $q+1$ \emph{points at infinity}, namely, $(q,y)$
 for any $y \in \mathbb{F}_q$ and $(q,q)$.
A classical result (see \cite{HP}, for instance) states that an affine plane $(\tilde{X},\mathcal{\tilde{B}})$ is induced by deleting the line $B_{(q,q)}$
 and its points. More precisely, let $\tilde{X}=X\setminus B_{(q,q)}$ and
 $\tilde{\mathcal{B}}=\{B_P\setminus B_{(q,q)}\,:\, B_P\in \mathcal{B},\ \ P\neq (q,q)\}$. For each $x\in\mathbb{F}_q$, the set of $q$ new lines
  $\{B_{(x,y)}\setminus\{(q,x)\}\,: \, y\in\mathbb{F}_q\}$ constitutes a \emph{parallel class} (a partition) of $\tilde{X}.$ The set
$\{B_{(q,z)}\setminus\{(q,q)\} \,: \, z\in\mathbb{F}_q\}$ is also a parallel class.
\end{remark}

Recall that a \emph{polarity} $\sigma$ of a projective plane $\Pi=(X,\mathcal{B})$ is a bijection
 $\sigma: X\cup \mathcal{B} \rightarrow X\cup \mathcal{B}$ which satisfies the properties: \emph{(i)} $\sigma(X)=\mathcal{B}$ (thus $\sigma(\mathcal{B})=X$),
\emph{(ii)} $\sigma^2$ is the identity ($\sigma$ is an involution), and \emph{(iii)} for any point $P$ and any line $B$, $P\in B$ if and only if
$\sigma(B) \in \sigma(P).$ Moreover, a point $P$ (resp., a line $B$) is \textit{absolute} (with respect to $\sigma$)
if $P\in\sigma(P)$ (resp., $\sigma(B)\in B$).

 \begin{lemma} \label{polaridade}
The projective plane $\Pi_{q}$ in Lemma \ref{plano} admits an orthogonal polarity. If $q$ is an odd prime power, the set of the absolute points
 $\{(x,y)\in\mathbb{F}_q\times\mathbb{F}_q \,:\, x^2=2y\}\cup\{(q,q)\}$ generates an oval. If $q$ is even, the set of the absolute points is
 the line $\{(0,y)\,:\, y\in\mathbb{F}_q\}\cup\{(q,q)\}.$
\end{lemma}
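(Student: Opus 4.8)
The plan is to exhibit an explicit polarity $\sigma$ on $\Pi_q$ coming from the natural symmetric bilinear form on the coordinates, verify the three defining properties of a polarity, and then compute the absolute points directly by solving the incidence condition $P \in \sigma(P)$.

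First I would define $\sigma$ on points by sending each point to the line with the ``same'' coordinates: $\sigma((x,y)) = B_{(x,y)}$ for $(x,y) \in \mathbb{F}_q \times \mathbb{F}_q$, $\sigma((q,z)) = B_{(q,z)}$ for $z \in \mathbb{F}_q$, and $\sigma((q,q)) = B_{(q,q)}$; on lines $\sigma$ is then forced to be the inverse assignment $B_P \mapsto P$, which makes $\sigma$ a bijection with $\sigma(X) = \mathcal{B}$ and $\sigma^2 = \mathrm{id}$ immediately. The only substantive property is (iii): for a point $P$ and a line $B = B_R$, one must check $P \in B_R \iff R \in B_P$ (writing $\sigma(B_R) = R$ and $\sigma(P) = B_P$). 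This is a finite case analysis driven by Eq.~(\ref{linhas}). In the generic case $P = (x_1,y_1)$, $R = (x_2,y_2)$ with both in $\mathbb{F}_q \times \mathbb{F}_q$, membership $P \in B_R$ unpacks to $y_1 = x_1 x_2 - y_2$, i.e. $x_1 x_2 = y_1 + y_2$, which is visibly symmetric in the two points, hence equivalent to $R \in B_P$. The remaining cases involve at least one point at infinity; for $P = (x_1,y_1)$ and $R = (q,z)$ one checks $P \in B_{(q,z)} \iff x_1 = z \iff (q,z) \in B_{(x_1,y_1)}$ (using that $B_{(x_1,y_1)}$ contains exactly the infinite point $(q,x_1)$), and the cases with $(q,q)$ are equally short. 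A brief remark that ``orthogonal'' (equivalently, a polarity with an oval of absolute points in odd characteristic) is the standard name for a polarity arising from a nondegenerate symmetric form covers that adjective.

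For the absolute points: $P$ is absolute iff $P \in \sigma(P)$. Taking $P = (x,y)$ with $x,y \in \mathbb{F}_q$, the condition $P \in B_{(x,y)}$ reads, by the generic case above with $x_1 = x_2 = x$ and $y_1 = y_2 = y$, as $x \cdot x = y + y$, that is $x^2 = 2y$. For $P = (q,z)$, the condition $(q,z) \in B_{(q,z)}$ fails since $B_{(q,z)} = \{z\}\times\mathbb{F}_q \cup \{(q,q)\}$ contains no point of the form $(q,z)$ with $z \in \mathbb{F}_q$; and $P = (q,q)$ is absolute since $(q,q) \in B_{(q,q)}$. So the absolute points are exactly $\{(x,y) : x^2 = 2y\} \cup \{(q,q)\}$. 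When $q$ is odd, $2$ is invertible, so this is the graph of $y = x^2/2$ together with one point at infinity — a conic, hence an oval of $q+1$ points, no three collinear. When $q$ is even, $2 = 0$, so the equation becomes $x^2 = 0$, i.e. $x = 0$; the absolute points are $\{(0,y) : y \in \mathbb{F}_q\} \cup \{(q,q)\}$, which is precisely the line $B_{(0,0)}$ (by Eq.~(\ref{linhas}), $B_{(0,0)} = \{(k, 0) : k\} \cup \{(q,0)\}$ — so one should double-check which line this set is; the set $\{(0,y):y\}\cup\{(q,q)\}$ is the line $B_{(q,0)}$). I would state it as the line through those points.

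The main obstacle is purely bookkeeping: getting the case analysis for property (iii) complete and correct across all mixed point/line types at infinity, and — in the even case — identifying the set of absolute points with the correct named line from Eq.~(\ref{linhas}) rather than asserting it loosely. The oval claim in odd characteristic is then immediate from the fact that a nondegenerate conic in $PG(2,q)$ is an oval (three points of $\{y = x^2/2\} \cup \{\infty\}$ collinear would force a line to meet a conic in three points), and for even $q$ one should additionally note this is the expected degenerate behaviour of orthogonal polarities in characteristic $2$. No deep input is needed beyond Lemma~\ref{plano}; the work is entirely in the verification.
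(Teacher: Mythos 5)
Your proposal is correct and follows essentially the same route as the paper: define $\sigma(P)=B_P$, $\sigma(B_P)=P$, verify property (iii) by a case analysis on the point types (the generic case reducing to the symmetric condition $x_1x_2=y_1+y_2$), and read off the absolute points from $x^2=2y$, splitting on the parity of $q$. Your extra care in identifying the even-characteristic absolute set as the specific line $B_{(q,0)}$ and in justifying the oval claim goes slightly beyond the paper's ``the conclusion is immediate,'' but the argument is the same.
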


\begin{proof}
The key ingredient for the construction of $\sigma$ is inspired by Eq. (\ref{linhas}). Consider the bijection
$\sigma: X\cup \mathcal{B} \rightarrow X\cup \mathcal{B}$ defined by $\sigma(P)=B_P$ and $\sigma(B_P)=P$ for all $P\in X$.
The bijection satisfies naturally the properties (i) and (ii). In order to proceed the proof of condition (iii), we analyze five cases.
Case 1: if $P\in\mathbb{F}_q\times\mathbb{F}_q$ and $Q=(q,q)$, then $P\notin B_Q$ and $Q\notin B_P$ and the equivalence holds. Case 2: the case where
both $P$ and $Q$ belong to $\{q\}\times \mathbb{F}_q$ is analogous to the previous one. Case 3: the case where $P\in\{q\}\times \mathbb{F}_q$
and $Q=(q,q)$ is obtained directly by the definitions of $B_P$ and $B_{(q,q)}$. Case 4: for $P=(x,y)$ and $Q=(a,b)$ in $\mathbb{F}_q\times\mathbb{F}_q$,
note that $(x,y)\in B_{(a,b)}$ is equivalent to $(a,b)\in B_{(x,y)}$, by the construction of the lines in Eq. (\ref{linhas}). Case 5: it remains
 the case $P=(x,y)\in\mathbb{F}_q\times\mathbb{F}_q$ and $Q=(q,z)$ where $z\in\mathbb{F}_q$. A look on Eq. (\ref{linhas}) reveals that $P\in B_Q$ if and only if
 $P=(z,y)$, which is equivalent to $Q\in B_P$. Thus $\sigma$ is a polarity of $\Pi_{q}$.

Note that $(q,q)$ is always an absolute point but a point in $\{q\}\times\mathbb{F}_q$ is not absolute.
For a point $(x,y)\in \mathbb{F}_q\times \mathbb{F}_q$, note that $(x,y)\in B_{(x,y)}$ if and only the point $(x,y)$ belongs to the curve $x^2=2y$.
The conclusion is immediate when $q$ is odd. If $q$ is even,
$\mathbb{F}_q$ is a field of characteristic 2, hence $2y=0$ for all
$y\in\mathbb{F}_q$. Thus $x^2=2y$ if and only if $x=0$, that is, $(0,y)$ is an absolute point for any $y\in\mathbb{F}_q$.
The proof is complete.
\end{proof}

\begin{example}\label{Pi_3}
\emph{We illustrate the incidence matrix of the projective plane
$\Pi_3$. Select the primitive element $\alpha=\overline{2}$ from the
field $\mathbb{Z}_3.$ The rows and columns of the matrix are labeled
by using the same sequence
$$(0,0),(0,\alpha),(0,\alpha^2),\ldots,(3,3),$$ according to the
table below. This choice of  the order will be explained in the next
section. The symbol $\underline{1}$ indicates an absolute point of
$\Pi_3$.}

\emph{
\begin{footnotesize}
\begin{center}
\begin{tabular}{|c||ccc|ccc|ccc|ccc|c|} \hline
(0,0) & $\underline{1}$ & 0 & 0 & 1 & 0 & 0 & 1 & 0 & 0 & 1 & 0 & 0 & 0 \\
(0,$\alpha$) & 0 & 0 & 1 & 0 & 0 & 1 & 0 & 0 & 1 & 1 & 0 & 0 & 0 \\
(0,$\alpha^2$) & 0 & 1 & 0 & 0 & 1 & 0 & 0 & 1 & 0 & 1 & 0 & 0 & 0 \\ \hline
($\alpha$,0) & 1 & 0 & 0 & 0 & 0 & 1 & 0 & 1 & 0 & 0 & 1 & 0 & 0 \\
($\alpha$,$\alpha$) & 0 & 0 & 1 & 0 & $\underline{1}$ & 0 & 1 & 0 & 0 & 0 & 1 & 0 & 0 \\
($\alpha$,$\alpha^2$) & 0 & 1 & 0 & 1 & 0 & 0 & 0 & 0 & 1 & 0 & 1 & 0 & 0 \\ \hline
($\alpha^2$,0) & 1 & 0 & 0 & 0 & 1 & 0 & 0 & 0 & 1 & 0 & 0 & 1 & 0 \\
($\alpha^2$,$\alpha$) & 0 & 0 & 1 & 1 & 0 & 0 & 0 & $\underline{1}$ & 0 & 0 & 0 & 1 & 0 \\
($\alpha^2$,$\alpha^2$) & 0 & 1 & 0 & 0 & 0 & 1 & 1 & 0 & 0 & 0 & 0 & 1 & 0 \\ \hline
(3,0) & 1 & 1 & 1 & 0 & 0 & 0 & 0 & 0 & 0 & 0 & 0 & 0 & 1 \\
(3,$\alpha$) & 0 & 0 & 0 & 1 & 1 & 1 & 0 & 0 & 0 & 0 & 0 & 0 & 1 \\
(3,$\alpha^2$) & 0 & 0 & 0 & 0 & 0 & 0 & 1 & 1 & 1 & 0 & 0 & 0 & 1 \\ \hline
(3,3) & 0 & 0 & 0 & 0 & 0 & 0 & 0 & 0 & 0 & 1 & 1 & 1 & $\underline{1}$ \\ \hline
\end{tabular}
\end{center}
\end{footnotesize}}
\end{example}


\section{Polarity graph on projective plane}\label{polarity_graph}

The main interest in polarity graphs comes from the seminal papers
by Brown \cite{Brown} and independently by Erd\"{o}s, R\'{e}nyi, and
S\'{o}s \cite{Erdos}, who presented ingenious connections between
projective geometries and certain Tur\'{a}n numbers.

\subsection{The first construction and structural properties}

Given  a prime power $q$, from now on consider $\Pi=\Pi_{q}$ as the projective plane in Lemma \ref{plano} and $\sigma$ as the polarity in Lemma
\ref{polaridade}. The \textit{polarity graph} $G_{q}$ can be induced by the pair $(\Pi,\sigma)$ as follows: the vertex set is $X$ and the edge set is
$$\{\{P,Q\}\,:\, P\neq Q\ \ \mbox{and}\ \ P\in\sigma(Q)\}.$$

This graph has $q^2+q+1$ vertices, where $q+1$ vertices have degree $q$ (corresponding to the $q+1$ absolute points, by Lemma \ref{polaridade})
 and the others $q^2$ vertices with degree $q+1$. A classical theorem by Baer states that a polarity of a projective plane of order $q$ has at least $q
 +1$ absolute points, see \cite{HP}. Hence there is no hope in finding a polarity graph from a projective plane with more edges than $G_q$.

 The graph $G_q$ does not contain a copy of $C_4$. Indeed, suppose for a contradiction that
there are distinct points  $P,Q,R,S\in X$ such that
$\{P,Q\}\subseteq B_R\cap B_S$. Thus the absurd $R=S$ holds,
remembering that any pair of points lie on one common line.

\begin{example}\label{G_3}
\emph{The adjacency matrix of $G_3$ is obtained from the matrix of Example \ref{Pi_3} replacing the symbols $\underline{1}$ by $0$.}
\end{example}

We intent to find a sequence of suitable induced subgraphs of $G_q$. For this purpose, an appropriate labeling of vertices
combined with a partition of the vertices plays a central hole. Write $\mathbb{F}_q=\{0,\alpha,\alpha^2,\ldots,\alpha^{q-1}\}$, where $\alpha$ is a primitive element
of $\mathbb{F}_q$.

\begin{constructionA}\label{constructionA}
\emph{Given a prime power $q$, consider the following induced subgraph of $G_q.$
$$G(q,0,0):=G_q[X\setminus \{(q,q)\}].$$
Moreover, let $V_0=\{0\}\times \mathbb{F}_q$, $V_j=\{\alpha^j\}\times \mathbb{F}_q$ for
every $j\in\{1,2,\ldots,q-1\}$, and $V_q=\{q\}\times \mathbb{F}_q$. Thus $V_0, V_1, \ldots, V_{q}$ induce a partition of
$V:=X\setminus \{(q,q)\}.$ Two vertices $P$ and $Q$ are adjacent
 in $G(q,0,0)$ if $P\in B_Q$, $P\in V_j$ and $Q\in V_l$ for distinct $j,l\in\{0,1,\ldots,q\}$.}
\end{constructionA}

\begin{lemma}\label{lemma2}
The graph $G(q,0,0)$ on $q^2+q$ vertices satisfies the following structural properties:
\begin{enumerate}
\item [(1)] $G(q,0,0)\subseteq K_{(q+1)\times q}$ and $C_4\nsubseteq G(q,0,0)$;
\item [(2)] For distinct $P$ and $Q$ of $V$, $|N_{G(q,0,0)}(P)\cap N_{G(q,0,0)}(Q)|\leq 1$;
\item [(3)] Let $P\in V_j$ with $0\leq j\leq q-1$. For each $l\in\{0,1,\ldots, q\}$ with $l\neq j$, $P$ is adjacent to exactly one vertex of $V_l$;
\item [(4)] Let $P=(q,y)\in V_{q}$. If $y=\alpha^j$ for some $j\in\{1,2,\ldots,q-1\}$ (resp., $y=0$),
then the $q$ vertices adjacent to $P$ are those in $V_j$ (resp., $V_0$).
\item [(5)] Let $\overline{G(q,0,0)}$ be the complement of $G(q,0,0)$ relative to $K_{(q+1)\times q}$. For all $P\in V$,
 $d_{G(q,0,0)}(P)=q$ and $d_{\overline{G(q,0,0)}}(P)=q^2-q.$
\end{enumerate}
\end{lemma}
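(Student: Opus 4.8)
The plan is to verify the five properties essentially by unpacking the definitions in Construction A together with the incidence structure of $\Pi_q$ established in Lemma \ref{plano} and the polarity $\sigma$ of Lemma \ref{polaridade}. First I would record the basic fact that $V=X\setminus\{(q,q)\}$ has $q^2+q$ elements and that the partition $V_0,V_1,\ldots,V_q$ consists of $q+1$ blocks of size $q$ each, since $V_j=\{\alpha^j\}\times\mathbb{F}_q$ for $0\le j\le q-1$, $V_0=\{0\}\times\mathbb{F}_q$ (reindexing so $\alpha^0$ plays the role of $0$ is a harmless abuse — actually the block at $x=0$), and $V_q=\{q\}\times\mathbb{F}_q$. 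For (1), the containment $G(q,0,0)\subseteq K_{(q+1)\times q}$ is immediate from the construction, because edges are only placed between vertices lying in distinct blocks; and $C_4\nsubseteq G(q,0,0)$ follows since $G(q,0,0)$ is an induced subgraph of $G_q$, which was already shown to be $C_4$-free (any two points lie on a unique common line, so two vertices have at most one common neighbour). Property (2) is then just the restatement of that same ``unique common line'' argument, now phrased for $G(q,0,0)$: if $R,S\in N(P)\cap N(Q)$ with $R\ne S$ then $P,Q\in B_R\cap B_S$ (using $P\in\sigma(R)=B_R$ etc.), forcing $R=S$.

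For (3), fix $P=(\alpha^j,y)\in V_j$ with $0\le j\le q-1$ (interpreting $\alpha^0$ appropriately as the relevant $x$-coordinate, or more cleanly: $P=(x,y)$ with $x\in\mathbb{F}_q$). I would show that for each block $V_l$ with $l\ne j$, the line $B_P$ meets $V_l$ in exactly one point; since adjacency of $P$ to a vertex $Q\in V_l$ is $P\in B_Q\iff Q\in B_P$ (polarity), the neighbours of $P$ in $V_l$ are exactly $B_P\cap V_l$. From Eq. (\ref{linhas}), $B_{(x,y)}=\{(k,kx-y):k\in\mathbb{F}_q\}\cup\{(q,x)\}$, so for each $k\in\mathbb{F}_q$ there is exactly one point of $B_P$ with first coordinate $k$, namely $(k,kx-y)$, giving one neighbour in each block $V_l$ with $l$ corresponding to $x$-coordinate $k\ne x$; and the point $(q,x)\in V_q$ gives the unique neighbour in $V_q$. (One must take a little care: if $k=x$ then $(x,xx-y)=(x,x^2-y)$ lies in $V_j$, the same block as $P$; this is the ``missing'' block, and it corresponds exactly to either a loop/absolute behaviour or simply to the block $l=j$ being excluded — I would note $P$ has no neighbour in its own block by the complete-multipartite structure, which is consistent.) For (4), $P=(q,y)\in V_q$ has $B_P=B_{(q,y)}$; by Eq. (\ref{linhas}), $B_{(q,z)}=(\{z\}\times\mathbb{F}_q)\cup\{(q,q)\}$, so if $y=\alpha^j$ then $B_P\setminus\{(q,q)\}=\{\alpha^j\}\times\mathbb{F}_q=V_j$ (and $V_0$ if $y=0$); since neighbours of $P$ in $G(q,0,0)$ are $(B_P\cap V)\setminus\{P\}$ and $P\notin B_P$ here (points of $\{q\}\times\mathbb{F}_q$ are non-absolute, as observed in Lemma \ref{polaridade}), $P$ is adjacent to exactly the $q$ vertices of $V_j$ (resp. $V_0$).

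Property (5) then follows by assembling (3) and (4): a vertex $P\in V_j$ with $0\le j\le q-1$ has, by (3), exactly one neighbour in each of the $q$ blocks $V_l$ with $l\ne j$, hence $d_{G(q,0,0)}(P)=q$; a vertex $P\in V_q$ has $d_{G(q,0,0)}(P)=q$ by (4). Since $G(q,0,0)\subseteq K_{(q+1)\times q}$, each vertex has $K_{(q+1)\times q}$-degree $(q+1-1)q=q^2$, whence $d_{\overline{G(q,0,0)}}(P)=q^2-q$ for all $P\in V$. The main obstacle — really the only delicate point — is the bookkeeping in (3) around the behaviour of the line $B_P$ at $k=x$ (the ``own block'' case) and making sure the count is ``exactly one vertex in each of the other $q$ blocks'' rather than $q+1$; this is where the distinction between absolute and non-absolute vertices and the removal of the point $(q,q)$ matter, and I would handle it by explicitly writing $B_P=\{(k,kx-y):k\in\mathbb{F}_q\setminus\{x\}\}\cup\{(x,x^2-y),(q,x)\}$ and reading off block membership term by term, noting that $(x,x^2-y)\in V_j$ and $(q,x)\in V_q$, so that the $q-1$ values $k\ne x$ plus the single point $(q,x)$ account for exactly one neighbour in each $V_l$, $l\ne j$, and no neighbour in $V_j$ — the latter because $P=(x,y)$ and its would-be same-block partner $(x,x^2-y)$ are not joined by an edge in the complete multipartite host. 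Everything else is routine substitution into Eq. (\ref{linhas}) and the polarity equivalence $P\in B_Q\iff Q\in B_P$.
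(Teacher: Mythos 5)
Your proposal is correct and follows essentially the same route as the paper: direct verification from Construction A, Eq.~(\ref{linhas}), and the polarity equivalence $P\in B_Q\iff Q\in B_P$, with the degree count in (5) assembled from (3) and (4). Your treatment is merely more explicit than the paper's (e.g., spelling out the $k=x$ ``own block'' case in (3), and deriving (2) from the unique-common-line property rather than as an immediate consequence of $C_4$-freeness), but the substance is identical.
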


\begin{proof}
For item (1), Remark \ref{remark1} and definition of $G_q$ imply $G(q,0,0)\subseteq K_{(q+1)\times q}$.
Since $G(q,0,0)$ is a subgraph of $G_q$ and $C_4\nsubseteq G_q$, we conclude that $C_4\nsubseteq G(q,0,0)$.
Item (2) is an immediate consequence of item (1). For item (3), write $P=(a,b)$. The analysis depends on $l$. Given $l\in\{0,1,\ldots, q-1\}$, $l\neq j$,
select the point $Q_l=(x_l,x_la-b)$ (with $x_l=0$ if $l=0$, and $x_l=\alpha^l$ if $l\neq 0$). If $l=q$, select then $Q_q=(q,a)$.
Remark \ref{remark1} yields the uniqueness of the vertex $Q_l$. Item (4) follows from a look on the definitions of $V_j$ and $B_{(q,\alpha^j)}$
(resp., $V_0$ and $B_{(q,0)}$) and from Remark \ref{remark1}. Item (5) is derived from items (3) and (4) and
the equation $d_{\overline{G(q,0,0)}}(P) + d_{G(q,0,0)}(P)= q^2$.
\end{proof}

\begin{example}\label{G_3_0}
\emph{We return to Example \ref{G_3}. By Construction A applied to $G_3$, the adjacency matrix of the graph $G(3,0,0)$ is displayed
in order to illustrate its properties.  Proposition \ref{lower} and Theorem \ref{Exatas}(2) applied when $q=3$ yields only $M_3(7)\geq 4$, while the
graph $G(3,0,0)$ provides us the optimal lower bound $M_3(7)\geq 5$. A more general result is established in the next result.}
\emph{
\begin{footnotesize}
\begin{center}
\begin{tabular}{|c||ccc|ccc|ccc|ccc|} \hline
(0,0) & 0 & 0 & 0 & 1 & 0 & 0 & 1 & 0 & 0 & 1 & 0 & 0  \\
(0,$\alpha$) & 0 & 0 & 0 & 0 & 0 & 1 & 0 & 0 & 1 & 1 & 0 & 0  \\
(0,$\alpha^2$) & 0 & 0 & 0 & 0 & 1 & 0 & 0 & 1 & 0 & 1 & 0 & 0  \\ \hline
($\alpha$,0) & 1 & 0 & 0 & 0 & 0 & 0 & 0 & 1 & 0 & 0 & 1 & 0  \\
($\alpha$,$\alpha$) & 0 & 0 & 1 & 0 & 0 & 0 & 1 & 0 & 0 & 0 & 1 & 0  \\
($\alpha$,$\alpha^2$) & 0 & 1 & 0 & 0 & 0 & 0 & 0 & 0 & 1 & 0 & 1 & 0  \\ \hline
($\alpha^2$,0) & 1 & 0 & 0 & 0 & 1 & 0 & 0 & 0 & 0 & 0 & 0 & 1  \\
($\alpha^2$,$\alpha$) & 0 & 0 & 1 & 1 & 0 & 0 & 0 & 0 & 0 & 0 & 0 & 1 \\
($\alpha^2$,$\alpha^2$) & 0 & 1 & 0 & 0 & 0 & 1 & 0 & 0 & 0 & 0 & 0 & 1  \\ \hline
(3,0) & 1 & 1 & 1 & 0 & 0 & 0 & 0 & 0 & 0 & 0 & 0 & 0  \\
(3,$\alpha$) & 0 & 0 & 0 & 1 & 1 & 1 & 0 & 0 & 0 & 0 & 0 & 0  \\
(3,$\alpha^2$) & 0 & 0 & 0 & 0 & 0 & 0 & 1 & 1 & 1 & 0 & 0 & 0  \\ \hline
\end{tabular}
\end{center}
\end{footnotesize}}
\end{example}

\begin{proposition}\label{exato_pot_primo_0}
For a prime power $q$, $M_q(q^2-q+1)=q+2$.
\end{proposition}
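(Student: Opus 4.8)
The plan is to prove the two matching bounds separately: the upper bound $M_q(q^2-q+1)\le q+2$ comes from the density estimate of Section~\ref{SectionUpper}, while the lower bound $M_q(q^2-q+1)\ge q+2$ comes from the polarity subgraph $G(q,0,0)$ of Construction~A.

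For the upper bound I would simply invoke Proposition~\ref{LimSup} with $s=q$ and $n=q^2-q+1$. Here $t=n+s-1=q^2$, so $\sqrt{t}=q\in\mathbb{Z}$; on the other hand $n+\sqrt{t}=q^2+1$ is not divisible by $q$ (since $q\nmid 1$), so we fall into the second branch of the bound and get $M_q(q^2-q+1)\le\bigl\lceil (q^2+1)/q\bigr\rceil+1=(q+1)+1=q+2$. This step is pure arithmetic.

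For the lower bound I would exhibit the witness $G(q,0,0)\subseteq K_{(q+1)\times q}$ from Construction~A. By Lemma~\ref{lemma2}(1) it contains no copy of $C_4$, and by Lemma~\ref{lemma2}(5) every vertex $P$ of $V=X\setminus\{(q,q)\}$ satisfies $d_{\overline{G(q,0,0)}}(P)=q^2-q=n-1$. Hence $\overline{G(q,0,0)}$ has maximum degree $n-1<n$, so $K_{1,n}\nsubseteq\overline{G(q,0,0)}$. Thus $G(q,0,0)$ certifies that $K_{(q+1)\times q}$ cannot be partitioned as required, giving $M_q(q^2-q+1)>q+1$, i.e.\ $M_q(q^2-q+1)\ge q+2$. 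Combining the two inequalities yields the stated equality.

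I do not expect a genuine obstacle here: once Construction~A and Lemma~\ref{lemma2} are in place, this proposition is essentially a direct corollary of Proposition~\ref{LimSup} and Lemma~\ref{lemma2}(1),(5). It is worth remarking (as Example~\ref{G_3_0} already hints for $q=3$) that the softer route through Proposition~\ref{lower} and the known values of $r(n)$ falls one short of the optimum, so the multipartite structure of $G(q,0,0)$ — the fact that its $q^2+q$ vertices split into $q+1$ classes of size $q$ while the complement degree stays exactly $n-1$ — is precisely what delivers the extra unit and makes both bounds tight.
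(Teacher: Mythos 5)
Your proposal is correct and follows essentially the same route as the paper: the upper bound via Proposition~\ref{LimSup} with $t=q^2$ and $q\nmid(q^2+1)$, and the lower bound via the witness $G(q,0,0)$ using items (1) and (5) of Lemma~\ref{lemma2}. No substantive difference from the paper's argument.
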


\begin{proof}
Let $s=q$, $n=q^2-q+1,$ and $t=n+s-1$. Since $n+\sqrt{t}=q^2+1$ and $q\nmid (q^2+1)$, Proposition \ref{LimSup} produces the upper bound
$M_q(n)\leq q+2.$ Now, items (1) and (5) of Lemma \ref{lemma2} ensure that $G(q,0,0)$ is a subgraph of $K_{(q+1)\times q}$
containing no $C_4$ such that $K_{1,q^2-q+1}\nsubseteq \overline{G(q,0,0)}$. Thus $M_q(n)>q+1$, completing the proof.
\end{proof}

\subsection{The second construction and the proof of Theorem D}

The graph $G(q,i,0)$ defined below is obtained by removing the last $i$ vertex classes from the vertex set of $G(q,0,0)$, more formally:

\begin{constructionB}\label{constructionB}
\emph{Let $q$ be a prime power. For each integer $i$, $0\leq i\leq q-1$, define the induced subgraph of $G(q,0,0)$
$$G(q,i,0):=G(q,0,0)\left[V_0\cup V_1\cup\ldots\cup V_{q-i}\right].$$}
\end{constructionB}

We are ready to prove Theorem D, which is a generalization of Proposition \ref{exato_pot_primo_0}.

\begin{proof}[{\bf Proof of Theorem D}]  Let $q$ be a  prime power and $i\in \{0,1,\ldots, q-1\}$. First, let us show that $M_q((q-1)(q-i)+1)\leq q-i+2.$
For $q\in \{2,3\}$, the upper bound follows straightforward from Proposition \ref{LimSup}. Suppose $q\geq 4$. Let us study the behavior of $t=n+s-1$, where $s=q$ and $n=(q-1)(q-i)+1$. 
There is an integer $a$, $0\leq a\leq \lfloor i/2\rfloor$, such that
\begin{equation}\label{desig}
(q-i+a)^2\leq t\leq (q-i+a+1)^2.
\end{equation}
Firstly, suppose that $\sqrt{t}$ is an integer. Elementary properties of congruences imply that $n+\sqrt{t} \equiv a+1 \, ({\rm mod}\,  q)$ or
 $n+\sqrt{t} \equiv a+2 \, ({\rm mod} \,  q)$. Note that $q\nmid (n+\sqrt{t})$ since $a\leq \lfloor i/2\rfloor\leq \lfloor (q-1)/2\rfloor $.
Hence Proposition \ref{LimSup} and Eq. (\ref{desig}) yield
$$M_q(n)\leq \left\lceil \frac{n+\sqrt{t}}{q} \right\rceil +1 = q-i+ \left\lceil \frac{i+1+\sqrt{t}}{q} \right\rceil = q-i+2.$$
A similar argument shows the upper bound when $\sqrt{t}$ is not an integer.

For the lower bound, let $H=G(q,i,0)$ defined in Construction B. By Lemma \ref{lemma2}(1) and (5), $H$ is a subgraph of
$K_{(q-i+1)\times q}$ that does not contain $C_4$ and $d_{H}(P)=q-i$ for all vertex $P$ in $H$. Denote the complement of $H$ relative
to $K_{(q-i+1)\times q}$ by $\overline{H}$. An application of the equation $d_{\overline{H}}(P) + d_{H}(P)= (q-i)q$ gives us $d_{\overline{H}}(P)=(q-1)(q-i)$.
It means that $K_{1,(q-1)(q-i)+1}\nsubseteq\overline{H}$. Hence $M_q(n)>q-i+1,$ completing the proof.
\end{proof}

We observe  that the upper bound given by
Proposition \ref{LimSup} is also achieved in the class of parameters considered
 in Theorem D.

\begin{example}
\emph{Removing the last three rows and columns from the matrix of Example \ref{G_3_0}, we obtain the adjacency matrix of $G(3,1,0)$.
Moreover, removing the last six rows and columns, the adjacency matrix of $G(3,2,0)$ is obtained. These graphs provide us the optimal
lower bounds $M_3(5)>3$ and $M_3(3)>2$.}
\end{example}

\begin{analysis}  \emph{ Theorem D might improve the lower bound obtained from Proposition \ref{lower} and Theorem \ref{Exatas}. For instance, let $n=(q-1)(q-i)+1$ and suppose that
$n=p^{2a}+1$ for some prime $p.$ Thus $q-1=p^k$ and $q-i=p^l$, where $k+l=2a$ and $l\leq k$ (since $1\leq i\leq q-1$). An application of
Proposition \ref{lower} and Theorem \ref{Exatas}(1) yields
$$
M_q(n) \geq \displaystyle{\left\lfloor \frac{r(n)-1}{q} \right\rfloor + 1} =
\displaystyle{ p^l+1+\left\lfloor \frac{p^{k/2+l/2}-p^l+1}{p^k+1} \right\rfloor} =
 p^l+1= q-i+1.
$$
In contrast, Theorem D give us $M_q(n)= q-i+2.$  Further, it is worth mentioning that for  $i$ in the range $0\leq i \leq q-3$, the parameters $(s, n)=(q,(q-1)(q-i)+1)$ do not satisfy the hypotheses of Proposition \ref{cond_aritmetica}. Thus, Theorem D establishes a new class of exact values on $M_s(n)$ which can not derived from Proposition \ref{cond_aritmetica} or Theorem A.}
\end{analysis}

\subsection{The third construction and the proof of Theorem E}

We investigate only regular subgraphs until now, but non-regular
subgraphs can bring us good lower bounds too.

\begin{constructionC}\label{constructionC}
\emph{Let $q\geq 3$ be a prime power. We set
$$G(q,0,1):=G(q,0,0) \left[ \left(V_0\cup V_1\cup\ldots\cup V_{q}\right)\setminus N_{G(q,0,0)}\left[ (0,\alpha^{q-1}) \right] \right].$$
More generally, for integers $i$ and $k$ such that $1\leq i\leq q-1$
and $1\leq k\leq q-2$, define 
$$G(q,i,k):=G(q,0,0)\left[\displaystyle{\bigcup_{j=0}^{q-i} V_j}\ \ \setminus\ \ \displaystyle{\bigcup_{l=1}^{k} N_{G(q,0,0)}\left[(0,\alpha^{q-l})\right]} \right].$$}
\end{constructionC}

\begin{remark} \label{contagem}
Items (1) and (3) of Lemma \ref{lemma2} ensure that $G(q,0,1)$ is  a subgraph of  $K_{(q+1)\times(q-1)}$. Also, by definition of $G(q,0,0)$, we have  $(q,0)\in N_{G(q,0,0)}(0,y)$, for any $y\in \mathbb{F}_q$. Thus, if $1\leq i \leq q-1$ and $1 \leq k \leq q-2$,  items (2) and (3) of Lemma \ref{lemma2} yield that  $G(q,i,k)$  is  a subgraph of  $K_{(q-i+1)\times(q-k)}$.
\end{remark}

\begin{example}
\emph{The adjacency matrix of $G(3,0,1)$ is presented below. By Construction C and Example \ref{G_3_0},
$$V(G(3,0,1))=\{(0,0),(0,\alpha),(\alpha,0),(\alpha,\alpha^2),(\alpha^2,0),(\alpha^2,\alpha^2),(3,\alpha),(3\alpha^2)\};$$
that is, $V(G(3,0,1))=V(G(3,0,0))\setminus N_{G(3,0,0)}[(0,\alpha^2)]$. The graph $G(3,0,1)$ induces the optimal lower bound $M_2(5)>4$ (note that $M_2(5)\leq 5$,
 by Proposition \ref{LimSup}). Removing the last two rows and columns of
 this matrix, the adjacency matrix of $G(3,1,1)$ appears. Removing the last four rows and columns we obtain
 the adjacency matrix of $G(3,2,1)$. The graphs $G(3,1,1)$ and $G(3,2,1)$ yield $M_2(4)>3$ and $M_2(3)>2$, respectively.}
\emph{
\begin{footnotesize}
\begin{center}
\begin{tabular}{|c||cc|cc|cc|cc|} \hline
(0,0) & 0 & 0 & 1 & 0 & 1 & 0 & 0 & 0  \\
(0,$\alpha$) & 0 & 0 & 0 & 1 & 0 & 1 & 0 & 0  \\ \hline
($\alpha$,0) & 1 & 0 & 0 & 0 & 0 & 0 & 1 & 0  \\
($\alpha$,$\alpha^2$) & 0 & 1 & 0 & 0 & 0 & 1 & 1 & 0  \\ \hline
($\alpha^2$,0) & 1 & 0 & 0 & 0 & 0 & 0 & 0 & 1  \\
($\alpha^2$,$\alpha^2$) & 0 & 1 & 0 & 1 & 0 & 0 & 0 & 1  \\ \hline
(3,$\alpha$) & 0 & 0 & 1 & 1 & 0 & 0 & 0 & 0  \\
(3,$\alpha^2$) & 0 & 0 & 0 & 0 & 1 & 1 & 0 & 0  \\ \hline
\end{tabular}
\end{center}
\end{footnotesize}}
\end{example}

We restate Theorem E for reader's convenience:

 \vspace*{3mm}

\noindent{\bf Theorem E.} {\emph{
Let $q$ be a prime power. The following bounds hold:
\begin{enumerate}
\item [(1)] If $q\geq 5$, then $M_{q-k}((q-k)(q-2)+2)=q+1$ for any $k$ such that $0\leq k\leq\lfloor q/2\rfloor -1$;
\item [(2)] If $q\geq 3$, then $M_{q-1}((q-1)^2+1)=q+2$.
\end{enumerate}}
\begin{proof}
Item (1): Firstly we prove the upper bound. Consider $q\geq 5$ and $0\leq k\leq\lfloor q/2\rfloor -1$. In order to apply Proposition \ref{LimSup},
 let $s=q-k$ and $n=(q-k)(q-2)+2$. Thus $t=n+s-1=(q-k)(q-1)+1$ and we have
\begin{equation}\label{desig2}
((q-k)-1))^2\leq t\leq ((q-k)+ (k+1)/2)^2.
\end{equation}
We claim that $(k+1)/2<q-k-2$. Suppose for a contradiction that
$(k+1)/2\geq q-k-2$. This inequality implies  that $k\geq (2q-5)/3$
and combining with the hypothesis $k\leq q/2-1$, we derive the
absurd $q\leq 4$.

Eq. (\ref{desig2}) and $(k+1)/2<q-k-2$ assure that $q-k < 2+\sqrt{t} < 2(q-k)$. This fact and Proposition \ref{LimSup} produce
$$M_{q-k}(n)\leq q-1+\left\lceil \frac{2+\sqrt{t}}{q-k} \right\rceil = q+1.$$

For the lower bound, let $H=G(q,1,k)$ be the graph defined in Construction C.  Remark \ref{contagem} and Lemma \ref{lemma2}(1) state that
$H\subseteq K_{q\times(q-k)}$ and $C_4\nsubseteq H$.

We claim that $d_H(P)\geq q-1-k$ for any vertex $P$ in $H$. Indeed, $d_{G(q,1,0)}(P)=q-1$ was showed in the proof of
Theorem D. For each $l$ such that $1\leq l\leq k$, Lemma \ref{lemma2}(2) ensures that there exists at most one vertex
$Q\in  N_{G(q,0,0)}\left[(0,\alpha^{q-l})\right]$ such that $Q\in N_{G(q,0,0)}(P)$. Since the  $k$ vertex sets
$N_{G(q,0,0)}\left[(0,\alpha^{q-l})\right]$, $1\leq l\leq k$, were removed from $G(q,1,0)$ in order to construct $H$, we conclude that
at most $k$ neighbors of $P$ were removed. Thus $d_H(P)\geq q-1-k$.

Denote the complement of $H$ relative to $K_{q\times(q-k)}$ by $\overline{H}$. The last inequality and the equation
$d_{\overline{H}}(P)+d_H(P)=(q-1)(q-k)$ imply
$d_{\overline{H}}(P)\leq (q-k)(q-2)+1$ for any vertex $P$ in $H$, thus $K_{1,n}\nsubseteq \overline{H}$. Therefore, $M_{q-k}(n)>q.$

Item (2): In this case, let $s=q-1$ and $n=(q-1)^2+1$. Hence $t=(q-1)^2+q-1$ is bounded by $(q-1)^2< t < q^2$. An application of
Proposition \ref{LimSup} produces
$$M_{q-1}(n)\leq \left\lceil \frac{n+\sqrt{t}}{q-1} \right\rceil +1 = q + \left\lceil \frac{1+\sqrt{t}}{q-1} \right\rceil \leq q+2.$$
For the lower bound, consider the graph $J=G(q,0,1)$ given in Construction C. Remark \ref{contagem} and Lemma \ref{lemma2}(1)  state that $J$ is a subgraph of
 $K_{(q+1)\times(q-1)}$ and $C_4\nsubseteq J$. Analogously to the proof of  item (1),
 $d_J(P)\geq q-1$ for any vertex $P$ in $J$. Applying the equation $d_{\overline{J}}(P)+d_J(P)=q(q-1)$, it follows that $d_{\overline{J}}(P)\leq (q-1)^2$. Thus
 $M_{q-1}(n)>q+1$. The proof of item (2) is complete.
\end{proof}

\begin{analysis} \emph{
Like as Theorem D, Theorem E(2) can improve some bounds from Section \ref{SecaoExatas}. We illustrate this phenomenon
by taking  $n=(q-1)^2+1$, where $q$ is a power of $2$, with $q\geq 4$.   A combination of Proposition \ref{lower} with Theorem \ref{Exatas}(5) yields
$M_{q-1}(n)\geq q+1.$
But the exact value $M_{q-1}(n)= q+2$ holds by Theorem E(2). }

\emph{It is also worth mentioning that if  $q$ is a prime power and
$q\geq 4$,  the parameters $(s,n)=(q-1,(q-1)^2+1)$ do not satisfy
the hypotheses of Proposition \ref{cond_aritmetica}.  Thus, for
these parameters,  the exact values on $M_s(n)$  supplied by
Theorem E(2)  can not be obtained from Proposition
\ref{cond_aritmetica} or Theorem A.}
\end{analysis}

A closer look reveals that a more general lower bound can be
obtained from the graphs $G(q,i,k)$, more specifically:

\begin{proposition}\label{pot_primo_inf}
Given a prime power $q$, let $i$ and $k$ be integers such that $0\leq i\leq q-1$ and $0\leq k\leq q-2$. Except when $i=0$ and $k>1$,
$$M_{q-k}((q-k-1)(q-i)+k+1)> q-i+1.$$
\end{proposition}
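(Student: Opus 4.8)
The plan is to produce, for each admissible pair $(i,k)$, the single witness subgraph $H:=G(q,i,k)$ coming from Construction C (with $G(q,0,0)$ and $G(q,i,0)$ read off Constructions A and B). Put $s=q-k$, $c=q-i+1$ and $n=(q-k-1)(q-i)+k+1$. It suffices to prove three things: \emph{(i)} $H$ is a spanning subgraph of $K_{c\times s}=K_{(q-i+1)\times(q-k)}$; \emph{(ii)} $C_4\nsubseteq H$; \emph{(iii)} $d_{\overline H}(P)\le n-1$ for every vertex $P$ of $K_{c\times s}$, where $\overline H$ is the complement of $H$ relative to $K_{c\times s}$. Indeed, \emph{(ii)} says $H$ has no $C_4$, while \emph{(i)} and \emph{(iii)} together say $\overline H$ has no $K_{1,n}$, so $H$ witnesses $M_{q-k}(n)>q-i+1$. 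Point \emph{(ii)} costs nothing: $H$ is an induced subgraph of the polarity graph $G_q$, which has no $C_4$.

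For \emph{(iii)} I would first pin down $d_H(P)$. By Lemma \ref{lemma2}(3)--(4) (equivalently, as observed in the proof of Theorem D) the graph $G(q,i,0)$ is $(q-i)$-regular, and $H$ arises from it by deleting the $k$ closed neighbourhoods $N_{G(q,0,0)}[(0,\alpha^{q-l})]$, $1\le l\le k$. A vertex $P$ that survives in $H$ is nonadjacent to every $(0,\alpha^{q-l})$, so a neighbour of $P$ destroyed by the $l$-th deletion is a common neighbour of $P$ and $(0,\alpha^{q-l})$ in $G(q,0,0)$; Lemma \ref{lemma2}(2) permits at most one such for each $l$, hence $d_H(P)\ge (q-i)-k$ (trivially so when this quantity is $\le 0$). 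Since every vertex of $K_{(q-i+1)\times(q-k)}$ has degree $(q-i)(q-k)$, we obtain $d_{\overline H}(P)=(q-i)(q-k)-d_H(P)\le (q-i)(q-k-1)+k=n-1$, which is \emph{(iii)}.

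The real content is \emph{(i)}: Remark \ref{contagem} already places $H$ inside $K_{(q-i+1)\times(q-k)}$, and it remains to see that every vertex class is used in full. I would split on $i$. If $i\ge 1$, the class $V_q$ is absent and the classes in play are $V_0,\dots,V_{q-i}$; from $V_0$ we delete exactly the $k$ distinct points $(0,\alpha^{q-1}),\dots,(0,\alpha^{q-k})$, and from a class $V_m$ with $1\le m\le q-i$ we delete, for each $l\in\{1,\dots,k\}$, the unique vertex of $V_m$ adjacent to $(0,\alpha^{q-l})$ supplied by Lemma \ref{lemma2}(3). The crux is that these $k$ vertices of $V_m$ are pairwise distinct: if those attached to $(0,\alpha^{q-l_1})$ and $(0,\alpha^{q-l_2})$ coincided at some $Z\in V_m$, then $Z$ and the point $(q,0)$ would be two distinct common neighbours of $(0,\alpha^{q-l_1})$ and $(0,\alpha^{q-l_2})$ --- by Remark \ref{contagem}, $(q,0)$ is adjacent to every point of $V_0$, while $(q,0)\in V_q$ so $(q,0)\notin V_m$ --- contradicting Lemma \ref{lemma2}(2). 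Thus every retained class loses exactly $k$ vertices and keeps $q-k$ of them. If $i=0$ the same reasoning handles $V_0,V_1,\dots,V_{q-1}$, but $V_q$ is exceptional: every deleted neighbourhood meets $V_q$ only in $(q,0)$, so $V_q$ loses a single vertex and keeps $q-1$; this is harmless exactly when $k\le 1$, which is why $i=0$ with $k>1$ must be excluded. The cases $k=0$ are immediate, since then $H=G(q,i,0)$ (or $G(q,0,0)$) is $(q-i)$-regular with all classes of size $q$.

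I expect the disjointness claim in \emph{(i)} --- that the $k$ vertices deleted inside a class $V_m$ are distinct, using the ``universal'' neighbour $(q,0)$ of $V_0$ together with the $C_4$-freeness packaged in Lemma \ref{lemma2}(2) --- to be the only subtle point; everything else is a degree count and bookkeeping. Finally it is worth noting that the proposition subsumes the lower bounds of Theorems D and E: $k=0$ yields $M_q((q-1)(q-i)+1)>q-i+1$, the value $i=1$ yields the lower bound of Theorem E(1), and $(i,k)=(0,1)$ yields the lower bound of Theorem E(2).
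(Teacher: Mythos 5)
Your argument is correct and is exactly the one the paper intends: the paper states Proposition \ref{pot_primo_inf} without proof, but it is designed to follow from Construction C in the same way as the lower bounds in Theorems D and E, namely by checking that $G(q,i,k)$ fills every class of $K_{(q-i+1)\times(q-k)}$ with exactly $q-k$ vertices and has minimum degree at least $q-i-k$, so its complement has maximum degree at most $(q-k-1)(q-i)+k$. Your disjointness argument via the common neighbour $(q,0)$ of $V_0$ (Remark \ref{contagem} plus Lemma \ref{lemma2}(2)) and your explanation of why $i=0$ with $k>1$ must be excluded (the class $V_q$ loses only one vertex) correctly supply the details the paper leaves implicit.
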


Propositions \ref{pot_primo_inf} and \ref{LimSup} provide us new
exact values. For instance, $M_3(6)=4$ is derived from the
parameters $(q,i,k)=(4,2,1)$, while $M_4(5)=3$ follows when
$(q,i,k)=(5,4,1)$.


\section{A table} \label{tabela}

Our results concerning the evaluation of the numbers $M_s(n)$ can be summarized in the following table.

\begin{table}[h]
\caption{Values of $M_s(n)$, $2\leq s\leq 5$ and $2\leq n\leq 17$*}
\centering
\begin{tabular}{c | p{1.1cm} p{1.1cm} p{1.4cm} p{1.1cm} p{1.1cm} p{1.4cm} p{1.4cm} p{1.4cm}} \hline
\diagbox{$s$}{$n$} & 2 & 3 & 4 & 5 & 6 & 7 & 8 & 9 \\ \hline
2 & 3$^{A,D}$ & 4$^{A,D}$ & 4-5$^{F,J,H}$ & 5$^{A,E}$ & 5-6$^{F,J}$ & 6$^{C}$ & 6-7$^{F,J}$ & 7-8$^H$ \\
3 & 3$^A$ & 3$^{A,D}$ & 3-4$^{F,J,H}$ & 4$^{A,D}$ & 4$^{A,J}$ & 5$^D$ & 5$^{A,J}$ & 5-6$^{F,H,J}$  \\
4 & 2-3$^K$ & 3$^A$ & 3$^{A,D}$ & 3$^{A,J}$ & 3-4$^F$ & 4$^{A,D}$ & 4$^{A,J}$ & 4-5$^{F,H}$  \\
5 & 2$^G$ & 2-3$^F$ & 3$^A$ & 3$^{A,D}$ & 3$^A$ & 3-4$^{F,J,H}$ & 3-4$^K$ & 4$^{A,D}$ \\
 \hline
\end{tabular}
\label{table}
\end{table}

\begin{table}[ht]
\centering
\begin{tabular}{c |p{1.0cm} p{1.0cm} p{1.1cm} p{1.1cm} p{1.0cm} p{1.5cm} p{1.6cm} p{1.5cm}} \hline
\diagbox{$s$}{$n$} & 10 & 11 & 12 & 13 & 14 & 15 & 16 & 17 \\ \hline
2 & 7-8$^H$ & 8-9$^H$ & 8-9$^J$ & 9-10$^H$ & 10$^C$ & 10-11$^H$ & 11-12$^{I,H}$ & 11-12$^{I,H}$ \\
3 & 6$^E$ & 6$^{A,B,J}$ & 6-7$^F$ & 6-7$^{H,J}$ & 7-8$^H$ & 7-8$^{F,J,H}$ & 7-8$^{I,J,H}$ & 8-9$^{I,J,H}$ \\
4 & 5$^D$ & 5$^{A,J}$ & 5$^A$ & 6$^D$ & 6$^E$ & 6$^A$ & 6-7$^{F,I,J,H}$ & 7$^E$ \\
5 & 4$^A$ & 4$^{A,J,H}$ & 4-5$^{F,J}$ & 5$^D$ & 5$^A$ & 5$^{A,J}$ & 5-6$^{F,I,J,H}$ & 6$^{D,E}$ \\
 \hline
\multicolumn{9}{l}{{\footnotesize{*Keys: \textit{A}: for Theorem A; \textit{B}: for Theorem B; \textit{C}: for Theorem C; \textit{D}: for Theorem D;}}} \\
\multicolumn{9}{l}{{\footnotesize{ \textit{E}: for Theorem E; \textit{F}: for Proposition \ref{n+1}; \textit{G}: for Proposition \ref{cond_aritmetica}; }}} \\
\multicolumn{9}{l}{{\footnotesize{ \textit{H}: for Theorem \ref{Exatas} and Propositions \ref{lower} and \ref{LimSup}; \textit{I}: for Proposition \ref{ramsey3};}}} \\
\multicolumn{9}{l}{{\footnotesize{ \textit{J}: for Propositions \ref{pot_primo_inf} and \ref{LimSup}; \textit{K}: $2\leq M_s(n)\leq M_s(n+1)$.}}} \\
\end{tabular}
\label{table2}
\end{table}

\newpage
{\bf Acknowledgments:} The first author is partially supported by Capes.
The second author is partially supported by CNPq/MCT grants: 311703/2016-0. \\


\end{document}